\DeclareMathOperator{\link}{link}
\DeclareMathOperator{\rk}{rank}
 \DeclareMathOperator{\sta}{star}
\DeclareMathOperator{\ft}{ft} \DeclareMathOperator{\wid}{wid}
\DeclareMathOperator{\ver}{Vert}
\DeclareMathOperator*{\bighash}{\scalerel*{\hash}{\textstyle\sum}}
\newcommand{\Zo}{\mathbb{Z}}
\newcommand{\Ro}{\mathbb{R}}
\newcommand{\Co}{\mathbb{C}}
\newcommand{\Zt}{\Zo_2}
\newcommand{\eqd}{\stackrel{\text{\tiny def}}{=}}
\newcommand{\congeq}{\stackrel{T}{\cong}}
\newcommand{\csum}[2]{\mathbin{_{#1}\!\hash_{#2}}}
\newcommand{\Sing}{Z}
\newcommand{\ta}{\mathfrak{t}}
\newcommand{\PsiE}{\Psi_E}
\newcommand{\PsiV}{\Psi_V}
\newcommand{\F}{\mathcal{F}}
\newcommand{\Dd}{\mathcal{D}}
\newcommand{\Ff}{\mathcal{F}}
\newcommand{\Ss}{\mathbb{S}}
\newcommand{\CP}{\mathbb{C}P}
\newcommand{\temp}{(\Gamma,\Psi_V,\Psi_E)}
\newcommand{\staro}{\sta^{\circ}}
\newcommand{\Ot}{O}
\newcommand{\Pp}{\mathcal{P}}
\newcommand{\bsl}{\scalerel*{/}{\sum}}
\newcommand{\bsm}{\scalerel*{\setminus}{\sum}}
\newcounter{stmcounter}[section]
\newcounter{thcounter}
\numberwithin{equation}{section}
\theoremstyle{plain}
\newtheorem{thm}[thcounter]{Theorem}
\newtheorem{prop}[stmcounter]{Proposition}
\newtheorem{lemma}[stmcounter]{Lemma}
\newtheorem{defin}[stmcounter]{Definition}
\newtheorem{problem}{Problem}
\newtheorem{claim}[stmcounter]{Claim}
\theoremstyle{definition}
\newtheorem{ex}[stmcounter]{Example}
\newtheorem{rem}[stmcounter]{Remark}
\newtheorem{con}[stmcounter]{Construction}
\begin{document}

\title{Toric origami structures on quasitoric manifolds}

\author[A. Ayzenberg]{Anton Ayzenberg}
\address{Department of Mathematics, Osaka City University, Sumiyoshi-ku, Osaka 558-8585, Japan.}
\email{ayzenberga@gmail.com}

\author[M. Masuda]{Mikiya Masuda}
\address{Department of Mathematics, Osaka City University, Sumiyoshi-ku, Osaka 558-8585, Japan.}
\email{masuda@sci.osaka-cu.ac.jp}

\author[S. Park]{Seonjeong Park}
\address{Division of Mathematical Models, National Institute for Mathematical Sciences, 463-1 Jeonmin-dong, Yuseong-gu, Daejeon 305-811, Korea}
\email{seonjeong1124@nims.re.kr}

\author[H. Zeng]{Haozhi Zeng}
\address{Department of Mathematics, Osaka City University, Sumiyoshi-ku, Osaka 558-8585, Japan.}
\email{zenghaozhi@icloud.com}

%\dedicatory{Dedicated to Professor ??Dolbilin on his ??th birthday}

\date{\today}
\thanks{The first author is supported by the JSPS postdoctoral fellowship program.
The second author was partially supported by Grant-in-Aid for
Scientific Research 25400095}

\subjclass[2010]{Primary 57S15, 53D20; Secondary 14M25, 52B20,
52B10, 05C10}

\keywords{toric origami manifold, origami template, Delzant
polytope, quasitoric manifold, characteristic function, planar
triangulation, coloring, discrete isoperimetric inequality}

\begin{abstract}
We construct quasitoric manifolds of dimension 6 and higher which
are not equivariantly homeomorphic to any toric origami manifold.
All necessary topological definitions and combinatorial
constructions are given and the statement is reformulated in
discrete geometrical terms. The problem reduces to existence of
planar triangulations with certain coloring and metric properties.
\end{abstract}

\maketitle

\section*{Introduction}

Origami manifolds appeared in differential geometry recently as a
generalization of symplectic manifolds \cite{ca-gu-pi}. Toric
origami manifolds are in turn generalizations of symplectic toric
manifolds. Toric origami manifolds are a special class of
$2n$-dimensional compact manifolds with an effective action of a
half-dimensional compact torus $T^n$. In this paper we consider
the following question. How large is this class? Which manifolds
with half-dimensional torus actions are toric origami manifolds?

Since the notion of a manifold with an effective half-dimensional
torus action is too general to deal with, we restrict to
quasitoric manifolds. This class of manifolds is large enough to
include many interesting examples, and small enough to keep
statements feasible. In \cite{ma-pa} Masuda and Park proved

\begin{thm}\label{thm4dim}
Any simply connected compact smooth $4$-manifold $M$ with an
effective smooth action of $T^2$ is equivariantly diffeomorphic to
a toric origami manifold.
\end{thm}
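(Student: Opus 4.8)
The starting point is the Orlik--Raymond classification of simply connected closed $4$-manifolds with an effective smooth $T^2$-action. It asserts that the orbit space $Q\eqd M/T^2$ is a closed $2$-disk, that every non-principal orbit lies over $\partial Q$, and that $\partial Q$ is divided into a cyclic string of edges $e_1,\dots,e_m$ to which one attaches primitive vectors $\lambda_1,\dots,\lambda_m\in\Zo^2$ --- the weights of the circle stabilisers along the $e_i$ --- subject only to the requirement that $\lambda_i$ and $\lambda_{i+1}$ form a $\Zo$-basis whenever $e_i$ and $e_{i+1}$ are adjacent. This datum determines $M$ up to equivariant diffeomorphism, every such datum occurs, and, equivalently, $M$ is equivariantly diffeomorphic either to $S^4$ or to an equivariant connected sum, taken at isolated fixed points, of copies of $\CP^2$, $\overline{\CP^2}$ and $S^2\times S^2$ carrying their standard linear $T^2$-actions. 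The plan is to build, for each such $M$, a toric origami manifold with the same orbit-space datum; since that datum is a complete equivariant diffeomorphism invariant, this proves the theorem.

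First I would note that the elementary summands are already toric origami manifolds. Every Delzant polygon is the moment polytope of a symplectic toric $4$-manifold, hence of a toric origami manifold with trivial folding; a Delzant triangle realises $\CP^2$ and, with the opposite orientation, $\overline{\CP^2}$, while a Delzant rectangle realises $S^2\times S^2$. The sphere $S^4\subset\Co^2\times\Ro$ carries the standard toric origami structure whose folding hypersurface is the invariant $3$-sphere $\{t=0\}$: the two closed hemispheres are equivariant $4$-balls, so that the origami template is obtained by gluing two Delzant triangles along a common facet, and the toric origami manifold it encodes is $S^4$ with its linear $T^2$-action.

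Next I would show that the equivariant connected sum at an isolated fixed point takes toric origami manifolds to toric origami manifolds, by a surgery on origami templates, and then induct on the number of summands. Near an isolated fixed point a toric origami manifold is equivariantly modelled on the standard $\Co^2$, so such a point is a Delzant vertex of the template, i.e.\ a unimodular $2$-dimensional cone. To sum $M_1$ at $v_1$ with $M_2$ at $v_2$ one truncates the corner of the template of $M_1$ at $v_1$ and the corner of the template of $M_2$ at $v_2$, creating a new short facet in each, and identifies the two truncated templates along these facets --- after, if necessary, applying a lattice automorphism to one of them --- so that the attached weight vectors agree; the shared edge is then declared a fold. Unimodularity of the two corners is exactly what makes this identification possible, and one checks that the result is again an origami template. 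Applying this operation along an Orlik--Raymond connected-sum decomposition of $M$, starting from the elementary pieces of the previous paragraph, produces the desired template.

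The step I expect to be the crux is the template surgery itself: one must verify that after truncation the newly created edge genuinely satisfies the co-orientation/compatibility axiom required of a fold, that every polygon of the enlarged template remains Delzant, and that no spurious non-principal orbits are introduced --- in short, that the object constructed is an honest origami template in the sense of Cannas da Silva--Guillemin--Pires, so that it integrates to a bona fide toric origami manifold whose orbit space, by construction, carries the Orlik--Raymond datum of $M$. A secondary but persistent difficulty is keeping track of orientations, since both the equivariant connected sum and the folding construction are sensitive to them.
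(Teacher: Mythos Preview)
This theorem is not proved in the present paper: it is quoted from \cite{ma-pa} as background and motivation, and the paper supplies no argument for it (beyond the incidental Example~\ref{exSfour}, which realises $S^4$ as a toric origami manifold). There is therefore no proof here to compare your proposal against.

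That said, your outline via the Orlik--Raymond classification followed by an inductive template surgery is a reasonable route, and is presumably close in spirit to what \cite{ma-pa} does (their title suggests the argument is phrased through multi-fans rather than through templates directly, but the underlying combinatorics should be the same). One point in your sketch deserves care. An origami template places all of its Delzant polygons in a \emph{single} copy of $\Ro^2$ with a \emph{single} lattice, so ``applying a lattice automorphism to one of them'' is not an operation internal to a template. What you may legitimately do is apply a global $GL_2(\Zo)$ transformation to the entire template for $M_2$ before gluing; this replaces the $T^2$-action on $M_2$ by a reparametrised one, and you must then argue that the glued manifold is still \emph{equivariantly} (not merely weakly equivariantly) diffeomorphic to the Orlik--Raymond connected sum you started from. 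A cleaner variant that sidesteps this bookkeeping is to bypass the connected-sum decomposition altogether and build the template directly from the cyclic weight sequence $(\lambda_1,\dots,\lambda_m)$: since consecutive pairs $\lambda_i,\lambda_{i+1}$ already form a lattice basis, every corner of the orbit polygon is Delzant, and the only task is to insert folds so that the full sequence of edge normals is realised by a chain of honest convex Delzant polygons in $\Ro^2$.
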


In particular, any $4$-dimensional quasitoric manifold is toric
origami. The same question about higher dimensions was open. In
this paper we give the negative answer.

\begin{thm}\label{thmQtorNotOrigami}
For any $n\geqslant 3$ there exist $2n$-dimensional quasitoric
manifolds, which are not equivariantly homeomorphic to any toric
origami manifold.
\end{thm}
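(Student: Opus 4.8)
The plan is to convert the topological statement into a problem about colored planar triangulations and then build an explicit obstruction. First I would recall the combinatorial dictionary. A quasitoric manifold is determined by a pair $(P,\lambda)$, where $P$ is a combinatorially simple $n$-polytope and $\lambda$ a characteristic function assigning to each facet a primitive vector of $\mathbb{Z}^{n}$ so that the $n$ facets through every vertex receive a $\mathbb{Z}$-basis; the orbit space of $M(P,\lambda)$ is $P$, regarded as a manifold with corners. A toric origami manifold is determined by an origami template, i.e. a finite family of Delzant polytopes glued in pairs along facets (the ``folds''), and its orbit space is the underlying manifold with corners of that template. An equivariant homeomorphism $M(P,\lambda)\to M'$ descends to a homeomorphism of orbit spaces respecting the isotropy stratification; hence if $M(P,\lambda)$ were equivariantly homeomorphic to a toric origami manifold, then $P$ itself would have to be dissected, by a system of interior ``fold walls,'' into pieces each carrying the structure of a Delzant polytope, with the facet normals of these pieces along $\partial P$ reproducing $\lambda$ up to $GL(n,\mathbb{Z})$. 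So it suffices to exhibit, for every $n\geqslant 3$, a pair $(P,\lambda)$ admitting no such Delzant dissection.

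Second I would dualize, treating $n=3$ as the essential case. Here $\partial P^{*}$ is a planar triangulation $K$ of $S^{2}$ and $\lambda$ becomes a coloring of the vertices of $K$ by primitive vectors of $\mathbb{Z}^{3}$, the three colors on each triangle forming a $\mathbb{Z}$-basis. A compatible Delzant dissection then amounts to cutting $K$ along a system of disjoint simple cycles into triangulated disks, each of which extends — by adjoining the cells dual to its folds — to the colored boundary complex of an honest $3$-dimensional Delzant polytope, the adjoined parts being identified in matching pairs. Thus the $n=3$ case of Theorem~\ref{thmQtorNotOrigami} reduces to producing a planar triangulation with a non-singular coloring for which no such dissection exists. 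For general $n\geqslant 3$ I would take $P=P^{3}\times\Delta^{\,n-3}$ with $\lambda$ the evident splitting, so that $M(P,\lambda)$ is $M(P^{3},\lambda^{3})\times\mathbb{C}P^{\,n-3}$; near a vertex of the simplex factor the last $n-3$ coordinate directions are Delzant-rigid, so any fold wall of a hypothetical template must there be a product of a wall in $P^{3}$ with $\Delta^{\,n-3}$, and slicing returns a forbidden Delzant dissection of $P^{3}$.

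Third — the core of the argument — I would construct $(K,\lambda)$. The coloring is made to ``wind'': along a long, thin neck of $K$ the colors rotate steadily in $\mathbb{Z}^{3}$, while $K$ is chosen to satisfy a discrete isoperimetric inequality, so that short separating cycles bound only small disks. Two facts are then played against each other. On one side, a Delzant $3$-polytope carries a smooth complete fan whose dual is a $2$-sphere, so it can absorb only a bounded amount of winding; hence a dissection piece that is large carries too much winding to embed in any such fan. On the other side, by the isoperimetric inequality one cannot surround the winding of a long neck by a region of small size without its bounding cycle being long, in fact global; so a dissection piece that is small, together with the matching condition forcing the fold data of adjacent pieces to agree, cannot be consistently completed. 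Running this dichotomy over every system of fold walls is what excludes all dissections, and hence proves that the resulting $M(P,\lambda)$ is not equivariantly homeomorphic to any toric origami manifold.

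I expect the main obstacle to be precisely this last non-existence step: the obstruction must be uniform over all dissections at once, and it is the discrete isoperimetric inequality for planar triangulations that makes this possible, by converting ``a Delzant piece needs much room'' into ``its bounding cycle is long,'' a configuration that the design of $K$ forbids. The remaining ingredients — the descent to orbit spaces, the reduction to $n=3$, the product construction, and the verification that $\lambda$ is non-singular and that $K$ has the required metric properties — are comparatively routine, although the explicit construction of $K$ must be carried out with care.
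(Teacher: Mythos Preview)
Your reduction to a combinatorial statement about colored triangulations is correct and matches the paper's setup, and you are right that a discrete isoperimetric inequality is the engine. But the core obstruction you propose is not the one that works, and as stated it has a gap.

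You base everything on a ``winding'' characteristic function and the assertion that a Delzant $3$-polytope ``can absorb only a bounded amount of winding.'' This is never made precise, and under any natural reading it is false or unusable: Delzant $3$-polytopes can have arbitrarily many facets with arbitrarily spread-out primitive normals, so there is no a priori bound on how much ``rotation'' a single Delzant piece can carry. The only robust constraint convexity gives you is that the outward normals are \emph{pairwise distinct}. The paper exploits exactly this, but in the opposite direction from yours: rather than a coloring with many values that ``winds,'' it uses the Four Color Theorem to produce a characteristic function on $K$ with at most four values. Then the obstruction is a one-line count: if a region $R_v$ coming from a Delzant polytope has more than $2r$ vertices (here $r\leqslant 4$), the distinct-normals property forces $\Lambda$ to take more than $r$ values on $R_v$, a contradiction. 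The isoperimetric inequality is then used only to manufacture $2$-spheres of large \emph{fatness} (every slicing has some region with more than $8$ vertices), via subdivided tetrahedra mapped to a round sphere. Your dichotomy ``large pieces carry too much winding / small pieces cannot be completed'' is replaced by the single statement ``some piece must be large, and large pieces need too many colors.'' Your second branch, about small pieces and matching conditions, is never needed and, as you describe it, is not an argument.

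Your lift to $n>3$ via $P^{3}\times\Delta^{n-3}$ and the claim that fold walls must be products ``near a vertex of the simplex factor'' is also shakier than necessary: origami templates in higher dimensions need not respect a product decomposition of the orbit space. The paper's reduction is cleaner and purely topological: characteristic submanifolds of toric origami manifolds are themselves toric origami (Construction~\ref{conOrbitOfOrigami}), so if $M^{6}$ is not origami then neither is $M^{6}\times S^{2}$, and one iterates.
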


We will describe an obstruction for a quasitoric $6$-manifold to
be toric origami and present a large series of examples, where
such an obstruction appears. Existence of such examples in higher
dimensions follows from $6$-dimensional case. In spite of
topological nature of the task, the proof is purely discrete
geometrical: it relies on metric and coloring properties of planar
graphs. Thus we tried to separate the discussion of established
facts in toric topology which motivated this study, from the proof
of the main theorem to keep things comprehensible for the broad
audience.

The paper is organized as follows. In section~\ref{sectTopology}
we briefly review the necessary topological objects, and describe
the standard combinatorial and geometrical models which are used
to classify them. The objects are: quasitoric manifolds,
symplectic toric manifolds, and toric origami manifolds. The
corresponding combinatorial models are: characteristic pairs,
Delzant polytopes, and origami templates respectively. In
section~\ref{sectWeightedSpheres} we introduce the notion of a
weighted simplicial cell sphere, which, in a certain sense,
unifies all these combinatorial models. We define a connected sum
of weighted spheres along vertices. This operation is dual to the
operation of producing an origami template from Delzant polytopes.
It plays an important role in the proof. Section~\ref{sectProof}
contains the combinatorial statement from which follows
Theorem~\ref{thmQtorNotOrigami}, and the proof of this statement.
The interaction of our study with the study of the Brownian map
allows to prove that asymptotically most simple 3-polytopes admit
quasitoric manifolds which are not toric origami. We describe this
interaction as well as other adjacent questions in the last
section~\ref{sectConclusion}.

Authors are grateful to the anonymous referee for his comments on
the previous version of the paper.

%%%%%%%%%%%%%%%%%%%%%%%%%%%%%%%%%%%%%%%%%%%%%%%%%%%%%%%%%%%%%%%%%%
%
%
%
%
%
%
%
%%%%%%%%%%%%%%%%%%%%%%%%%%%%%%%%%%%%%%%%%%%%%%%%%%%%%%%%%%%%%%%%%%

\section{Topological preliminaries}\label{sectTopology}

%%%%%%%%%%%%%%%%%%%%%%%%%%%%%%%%%%%%%%%%%%%%%%%%%%%%%%%%%%%%%%%%%%
%
%
%%%%%%%%%%%%%%%%%%%%%%%%%%%%%%%%%%%%%%%%%%%%%%%%%%%%%%%%%%%%%%%%%%

\subsection{Quasitoric manifolds}

The subject of this subsection originally appeared in the seminal
work of Davis and Januszkiewicz \cite{da-ja}. The modern
exposition and technical details can be found in
\cite[Ch.7]{bu-pa14}.

Let $T^n$ be a compact $n$-dimensional torus. The standard
representation of $T^n$ is a representation
$T^n\curvearrowright\Co^n$ by coordinate-wise rotations, i.e.
\[(t_1,\ldots,t_n)\cdot(z_1,\ldots,z_n)=(t_1z_1,\ldots,t_nz_n),\]
for $z_i,t_i\in \Co$, $|t_i|=1$.

The action of $T^n$ on a smooth manifold $M^{2n}$ is called
locally standard, if $M$ has an atlas of standard charts, each
isomorphic to a subset of the standard representation. More
precisely, a standard chart on $M$ is a triple $(U,f,\psi)$, where
$U\subset M$ is a $T^n$-invariant open subset, $\psi$ is an
automorphism of $T^n$, and $f$ is a $\psi$-equivariant
homeomorphism $f\colon U\to W$ onto a $T^n$-invariant open subset
$W\subset\Co^n$ (i.e. $f(t\cdot y) = \psi(t)\cdot f(y)$ for all
$t\in T^n$, $y\in U$). In the following $M$ is supposed to be
compact.

Since the orbit space $\Co^n/T^n$ of the standard representation
is a nonnegative cone $\Ro_{\geqslant}^n=\{x\in \Ro^n\mid
x_i\geqslant 0\}$, the orbit space of any locally standard action
obtains the structure of a compact smooth manifold with corners.
Recall that a manifold with corners is a topological space locally
modeled by open subsets of $\Ro_{\geqslant}^n$ with the
combinatorial stratification induced from the face structure of
$\Ro_{\geqslant}^n$. There are many technical details about the
formal definition which we left beyond the scope of this work (the
exposition relevant to our study can be found in \cite{bu-pa14} or
\cite{yo}).

The orbit space $Q=M/T^n$ of a locally standard action carries an
additional information about stabilizers of the action, called a
characteristic function. Let $\F(Q)$ denote the set of facets of
$Q$ (i.e. faces of codimension $1$). For each facet $F$ of $Q$
consider a stabilizer subgroup $\lambda(F)\subset T^n$ of points
in the interior of $F$. This subgroup is $1$-dimensional and
connected, thus it has the form
$\{(t^{\lambda_1},\ldots,t^{\lambda_n})\mid t\in T^1\}\subset
T^n$, for some primitive integral vector
$(\lambda_1,\ldots,\lambda_n)\in \Zo^n$, defined uniquely up to a
common sign. Thus, a primitive integral vector (up to sign)
$\Lambda(F)\in \Zo^n/\pm$ is associated with any facet $F$ of $Q$.
This map $\Lambda\colon \F(Q)\to \Zo^n/\pm$ is called a
\emph{characteristic function} (or a characteristic map). It
satisfies the following so called \eqref{eqStarCond}-condition:

\begin{equation}\label{eqStarCond}
\parbox[c]{10cm}{\center{
If facets $F_1,\ldots,F_s$ intersect, then the vectors
$\Lambda(F_1),\ldots,\Lambda(F_s)$ span a direct summand of
$\Zo^n$.}}\tag{$*$}
\end{equation}
Here we actually take not a class $\Lambda(F_i)\in \Zo^n/\pm$, but
one of its two particular representatives in $\Zo^n$. Obviously,
the condition does not depend on the choice of sign, thus
\eqref{eqStarCond} is well defined. The same convention appears
further in the text without special mention.

It is convenient to view the characteristic function $\Lambda$ on
$Q=M/T^n$ as a generalized coloring of facets. We assign primitive
integral vectors to facets instead of simple colors, and condition
\eqref{eqStarCond} is the requirement for this ``coloring'' to be
proper. The general idea, which simplifies many considerations in
toric topology is that the combinatorial structure of the orbit
space $Q$ together with the assigned coloring completely encodes
the equivariant homeomorphism type of $M$ in many cases. The
precise statement also involves the so called Euler class of the
action, which is an element of $H^2(Q;\Zo^n)$, and allows to
classify all compact smooth manifolds with locally standard torus
actions. The reader may find this general statement in \cite{yo}.

Anyway, we will work with only a special type of locally standard
torus actions, namely quasitoric manifolds. In this special case
Euler class vanishes, so we will not care about it.

\begin{defin}\label{definQtoric}
A manifold $M^{2n}$ with a locally standard action of $T^n$ is
called quasitoric, if the orbit space $M/T^n$ is homeomorphic to a
simple polytope as a manifold with corners.
\end{defin}

Recall that a convex polytope $P$ of dimension $n$ is called
simple if any of its vertices lies in exactly $n$ facets. In other
words, a simple polytope is a polytope which is at the same time a
manifold with corners. Considering manifolds with corners, simple
polytopes are the simplest geometrical examples one can imagine.
This makes the definition of quasitoric manifold very natural.

Let $P$ be a simple polytope and $\Lambda$ be a characteristic
function, i.e. any map $\Lambda\colon\F(P)\to \Zo^n/\pm$
satisfying \eqref{eqStarCond}-condition. The pair $(P,\Lambda)$ is
called a \emph{characteristic pair}. According to \cite{da-ja},
there is a one-to-one correspondence
\[
\{\mbox{quasitoric
manifolds}\}\leftrightsquigarrow\{\mbox{characteristic pairs}\}
\]
up to equivariant homeomorphism on the left-hand side and
combinatorial equivalence on the right-hand side. Given a
characteristic pair, one can construct the corresponding
quasitoric manifold explicitly.

\begin{con}[Model of quasitoric manifold]\label{conModelQtoric}
Let $(P,\Lambda)$ be a characteristic pair, $\dim P=n$. Consider a
topological space
\begin{equation}\label{eqQtoricModel}
M_{(P,\Lambda)}=P\times T^n / \sim.
\end{equation}
The equivalence $\sim$ is generated by relations $(p,t_1)\sim
(p,t_2)$ where $p$ lies in a facet $F\in \F(P)$ and
$t_1t_2^{-1}\in \lambda(F)$. The torus $T^n$ acts on
$M_{(P,\Lambda)}$ by rotating second coordinate and the orbit
space $M_{(P,\Lambda)}/T^n$ is isomorphic to $P$. Condition
\eqref{eqStarCond} implies that the action is locally standard.
There is a smooth structure on $M_{(P,\Lambda)}$ and the action of
$T^n$ is smooth (the construction of smooth structure can be found
in \cite{bu-pa07}). Therefore, $M_{(P,\Lambda)}$ is a quasitoric
manifold.
\end{con}

Let $\eta$ denote the projection to the orbit space $\eta\colon
M_{(P,\Lambda)}\to P$. Each facet $F\in \F(P)$ determines a smooth
submanifold $N_F\eqd \eta^{-1}(F)\subset M_{(P,\Lambda)}$ of
dimension $2n-2$, called characteristic submanifold. On its own,
the manifold $N_F$ is again a quasitoric manifold with the orbit
space $F$.

%%%%%%%%%%%%%%%%%%%%%%%%%%%%%%%%%%%%%%%%%%%%%%%%%%%%%%%%%%%%%%%%%%
%
%
%%%%%%%%%%%%%%%%%%%%%%%%%%%%%%%%%%%%%%%%%%%%%%%%%%%%%%%%%%%%%%%%%%

\subsection{Toric origami manifolds}

In the following subsections we recall the definitions and
properties of toric origami manifolds and origami templates. More
detailed exposition of this theory can be found in
\cite{ca-gu-pi}, \cite{ma-pa} or \cite{ho-pi12}.

A \emph{folded symplectic form} on a $2n$-dimensional smooth
manifold $M$ is a closed $2$-form $\omega$ such that

\begin{itemize}
\item Its top power $\omega^n$ is transversall to the zero section of
$\Lambda^{2n}(T^*M)$. As a consequence $\omega^n$ vanishes on a
smooth submanifold $\Sing\subset M$ of codimension 1.
\item The restriction of $\omega$ to $\Sing$ has maximal rank.
\end{itemize}

The hypersurface $\Sing$ where $\omega$ is degenerate is called
the \emph{fold}. The pair $(M, \omega)$ is called a \emph{folded
symplectic manifold}. If $\Sing$ is empty, $\omega$ is a genuine
symplectic form and $(M, \omega)$ is a genuine symplectic manifold
according to classical definition.

The reader may get a feeling of this notion by working locally.
Darboux's theorem says that any symplectic form can be written
locally as $\sum_i dx_i\wedge dy_i$ in appropriate coordinates.
The folded forms are exactly the forms written as
\[x_1dx_1\wedge dy_1 + \sum_{i>1} dx_i\wedge dy_i\]
in appropriate coordinates (for this analogue of Darboux's theorem
see \cite{ca-gu-pi} and references therein). The fold $\Sing$ is
thus a hypersurface given locally by $x_1=0$.

Since the restriction of $\omega$ to $\Sing$ has maximal rank, it
has a one-dimensional kernel at each point of $\Sing$. This
determines a line field on $\Sing$ called the \emph{null
foliation}. If the null foliation is the vertical bundle of some
principal $S^1$-fibration $\Sing\to Y$ over a compact base $Y$,
then the folded symplectic form $\omega$ is called an
\emph{origami form} and the pair $(M,\omega)$ is called an
\emph{origami manifold}.

The action of a torus $T$ (of any dimension) on an origami
manifold $(M,\omega)$ is called Hamiltonian if it admits a moment
map $\mu\colon M\to \ta^*$ to the dual Lie algebra of the torus,
which satisfies the conditions: (1) $\mu$ is equivariant with
respect to the given action of $T$ on $M$ and the coadjoint action
of $T$ on the vector space $\ta^*$ (by commutativity of torus this
action is trivial); (2) $\mu$ collects Hamiltonian functions, that
is, $d\langle\mu,V\rangle = \omega(\tilde{V},\cdot)$, where
$\langle\mu,V\rangle$ is the function on $M$, taking the value
$\langle\mu(x),V\rangle$ at a point $x\in M$, $\tilde{V}$ is a
vector flow on $M$, generated by $V\in\ta$, and
$\omega(\tilde{V},\cdot)$ is its dual 1-form.

\begin{defin}
A toric origami manifold $(M,\omega,T,\mu)$, abbreviated as $M$,
is a compact connected origami manifold $(M,\omega)$ equipped with
an effective Hamiltonian action of a torus $T$ with $\dim T =
\frac12\dim M$ and with a choice of a corresponding moment map
$\mu$.
\end{defin}

%%%%%%%%%%%%%%%%%%%%%%%%%%%%%%%%%%%%%%%%%%%%%%%%%%%%%%%%%%%%%%%%%%
%
%
%%%%%%%%%%%%%%%%%%%%%%%%%%%%%%%%%%%%%%%%%%%%%%%%%%%%%%%%%%%%%%%%%%

\subsection{Symplectic toric manifolds}

When the fold $\Sing$ is empty, a toric origami manifold is a
\emph{symplectic toric manifold}. In this case the image $\mu(M)$
of the moment map is a Delzant polytope in $\ta^*$, and the map
$\mu\colon M\to \mu(M)$ itself can be identified with the map to
the orbit space $\eta\colon M\to M/T^n$. A classical theorem of
Delzant \cite{del} says that symplectic toric manifolds are
classified by the images of their moment maps in $\ta^*\cong
\Ro^n$. In other words, there is a one-to-one correspondence
\[
\{\mbox{symplectic toric
manifolds}\}\leftrightsquigarrow\{\mbox{Delzant polytopes}\}
\]
up to equivariant symplectomorphism on the left-hand side, and
affine equivalence on the right-hand side. Let us recall the
notion of Delzant polytope.

\begin{defin}
A simple convex polytope $P\subset \Ro^n$ is called Delzant, if
its normal fan is smooth (with respect to a given lattice
$\Zo^n\subset\Ro^n$). In other words, all normal vectors to facets
of $P$ have rational coordinates, and, whenever facets
$F_1,\ldots,F_n$ meet in a vertex of $P$, the primitive normal
vectors $\nu(F_1),\ldots,\nu(F_n)$ form a basis of the lattice
$\Zo^n$.
\end{defin}

Let $M_P$ be the symplectic toric manifold corresponding to
Delzant polytope $P$. We do not need the construction of Delzant
correspondence in full generality, but we need to review the
topological construction of symplectic toric manifold
corresponding to a given Delzant polytope. Forgetting the
symplectic structure, any symplectic toric manifold, as a smooth
manifold with $T^n$-action, is a quasitoric manifold.

\begin{con}[Topological model of symplectic toric
manifold]\label{conModelSymplectic} Let $P$ be a Delzant polytope
in $\Ro^n$. For a facet $F\in \F(P)$ consider its outward
primitive normal vector $\tilde{\nu}(F)\in \Zo^n$. Consider the
corresponding vector modulo sign: $\nu(F)\in\Zo^n/\pm$. By the
definition of Delzant polytope, $\nu\colon\F(P)\to \Zo^n/\pm$
satisfies \eqref{eqStarCond}, thus provides an example of a
characteristic function. The manifold $M_P$ is equivariantly
diffeomorphic to quasitoric manifold $M_{(P,\Lambda)}$
corresponding to the characteristic pair $(P,\Lambda)$.
\end{con}

%%%%%%%%%%%%%%%%%%%%%%%%%%%%%%%%%%%%%%%%%%%%%%%%%%%%%%%%%%%%%%%%%%
%
%
%%%%%%%%%%%%%%%%%%%%%%%%%%%%%%%%%%%%%%%%%%%%%%%%%%%%%%%%%%%%%%%%%%

\subsection{Origami templates}

Delzant theorem provides a one-to-one correspondence between
symplectic toric manifolds and Delzant polytopes. To generalize
this correspondence to toric origami manifolds we need a notion of
an origami template, which we review next.

Let $\Dd_n$ denote the set of all (full-dimensional) Delzant
polytopes in $\Ro^n$ (w.r.t. a given lattice) and $\Ff_n$ the set
of all their facets.

\begin{defin}\label{definOrigamiTemplate}
An origami template is a triple $(\Gamma,\PsiV,\PsiE)$, where
\begin{itemize}
\item $\Gamma$ is a connected finite graph (loops and multiple edges
are allowed) with the vertex set $V$ and edge set $E$;
\item $\Psi_V$ is a map, which associates to any vertex of $\Gamma$ a
full-dimensional Delzant polytope, $\PsiV\colon V\to \Dd_n$;
\item $\Psi_E$ is a map, which associates to any edge of $\Gamma$
a facet of Delzant polytope, $\PsiE\colon E\to \Ff_n$;
\end{itemize}
subject to the following conditions:
\begin{enumerate}
\item[1.] If $e\in E$ is an edge of $\Gamma$ with endpoints $v_1, v_2\in
V$, then $\PsiE(e)$ is a facet of both polytopes $\PsiV(v_1)$ and
$\PsiV(v_2)$, and these polytopes coincide near $\PsiE(e)$ (this
means there exists an open neighborhood $U$ of $\PsiE(e)$ in
$\Ro^n$ such that $U\cap\PsiV(v_1)=U\cap\PsiV(v_2)$).
\item[2.] If $e_1,e_2\in E$ are two edges of $\Gamma$ adjacent to $v\in V$,
then $\PsiE(e_1)$ and $\PsiE(e_2)$ are disjoint facets of
$\PsiV(v)$.
\end{enumerate}
The facets of the form $\PsiE(e)$ for $e\in E$ are called the fold
facets of the origami template.
\end{defin}

For convenience in the following we call the vertices of graph
$\Gamma$ the nodes.

One can simply view an origami template as a collection of
(possibly overlapping) Delzant polytopes $\{\PsiV(v)\mid v\in V\}$
in the same ambient space, with some gluing data, encoded by a
template graph $\Gamma$. When $n=2$, the picture looks like a
folded sheet of paper on a flat plane, which is one of the
explanations for the term ``origami'' (see
Fig.~\ref{fig:templates}). Nevertheless, to avoid the confusion,
we should mention that most flat origami models in a common sense
are not origami templates in the sense of
Definition~\ref{definOrigamiTemplate}.

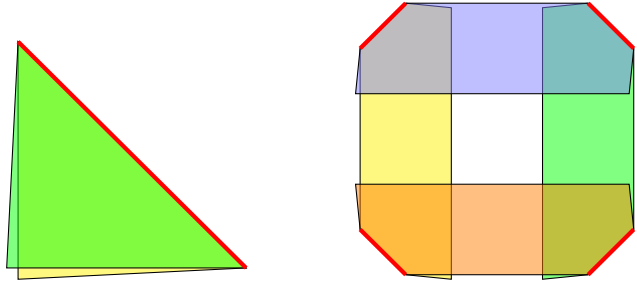
\begin{figure}[h]
    \begin{center}
        \begin{tikzpicture}[scale=3]
            \pgfsetfillopacity{0.5}
            \filldraw[fill=yellow] (0,-0.05)--(1,0)--(0,1)--cycle;
            \filldraw[fill=green] (-0.05,0)--(1,0)--(0,1)--cycle;
            \draw[ultra thick,red] (1,0)--(0,1);
        \end{tikzpicture}\qquad\qquad
        \begin{tikzpicture}[scale=.6]
            \pgfsetfillopacity{0.5}
            \filldraw[fill=yellow] (1,0)--(2,-0.1)--(2,5.9)--(1,6)--(0,5)--(0,1)--cycle;
            \filldraw[fill=green] (4,-0.1)--(5,0)--(6,1)--(6,5)--(5,6)--(4,5.9)--cycle;
            \filldraw[fill=orange] (-0.1,2)--(0,1)--(1,0)--(5,0)--(6,1)--(5.9,2)--cycle;
            \filldraw[fill=blue!50] (-0.1,4)--(5.9,4)--(6,5)--(5,6)--(1,6)--(0,5)--cycle;
            \draw[ultra thick,red] (1,0)--(0,1);
            \draw[ultra thick,red] (5,0)--(6,1);
            \draw[ultra thick,red] (6,5)--(5,6);
            \draw[ultra thick,red] (0,5)--(1,6);
        \end{tikzpicture}
    \end{center}
    \caption{Examples of origami templates in $\dim = 2$. Fold facets are shown in red.
    The lines which should actually coincide are drawn close to each other for convenience.}
\label{fig:templates}
\end{figure}
%??!! color - change for journal

The following is a generalization of Delzant's theorem to toric
origami manifolds.

\begin{thm}[\cite{ca-gu-pi}]\label{theo:classifOrigami}
Assigning the moment data of a toric origami manifold induces a
one-to-one correspondence
\[
\{\mbox{toric origami
manifolds}\}\leftrightsquigarrow\{\mbox{origami templates}\}
\]
up to equivariant origami symplectomorphism on the left-hand side,
and affine equivalence on the right-hand side.
\end{thm}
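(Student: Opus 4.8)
The plan is to build the correspondence in both directions and check that the two constructions are mutually inverse, invoking Delzant's theorem as a black box on each ``piece''. The analytic engine throughout is an equivariant folded Darboux normal form near the fold $\Sing$: a $T$-invariant tubular neighborhood of $\Sing$ is equivariantly modelled on a neighborhood of $\Sing\times\{0\}$ in $\Sing\times(-\varepsilon,\varepsilon)$, with $\omega=x_1\,dx_1\wedge dy_1+\sum_{i>1}dx_i\wedge dy_i=\tfrac12\,d(x_1^2)\wedge dy_1+\sum_{i>1}dx_i\wedge dy_i$ in suitable coordinates, where $\partial/\partial y_1$ generates the (global, circle) null foliation and $x_1$ is the coordinate transverse to $\Sing$.

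\emph{From a toric origami manifold to a template.} Given $(M,\omega,T,\mu)$, first note that $\Sing$ is $T$-invariant, being the zero locus of the invariant section $\omega^n$; hence the null foliation and its fibration $\Sing\to Y$ are $T$-invariant. Cut $M$ along $\Sing$; on each side of each component of $\Sing$ the substitution $s=\tfrac12 x_1^2\geqslant 0$ converts the folded form into a genuine symplectic form on a manifold with boundary, the boundary being a union of components of $\Sing$ along which $\omega$ is pulled back from $Y$. Collapsing the null foliation along each such boundary component and gluing in the associated disk bundle — here one uses that $\Sing\to Y$ is a \emph{principal} circle bundle — caps it off to a closed symplectic manifold. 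Its connected components $M_v$ are compact symplectic toric manifolds; let $V$ be the set of these components, $E$ the set of components $\Sing_e$ of $\Sing$ (with $e$ joining the at most two nodes whose pieces are glued along $\Sing_e$, so loops and multiple edges occur), put $\PsiV(v)=\mu(M_v)$, a Delzant polytope, and let $\PsiE(e)$ be the image in $\PsiV(v)$ of the cap corresponding to $\Sing_e$, which the local model exhibits as a facet. Condition~1 of Definition~\ref{definOrigamiTemplate} holds because near $\Sing_e$ the two adjacent pieces are unfolded from the \emph{same} folded neighborhood, so their moment images coincide near $\PsiE(e)$; Condition~2 holds because distinct components of $\Sing$ adjacent to one piece are disjoint submanifolds of $M$, hence the corresponding caps, which are characteristic submanifolds of $M_v$, are disjoint, forcing the facets to be disjoint. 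The only ambiguity in the data is the translation freedom in $\mu$ and the lattice identification, which is exactly the affine equivalence of templates.

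\emph{From a template to a toric origami manifold.} Given $\temp$, let $(M_v,\omega_v,T,\mu_v)$ be the symplectic toric manifold of $\PsiV(v)$ supplied by Delzant's theorem. For an edge $e$ with fold facet $F=\PsiE(e)$ and endpoints $v_1,v_2$, Condition~1 together with the equivariant normal form of a symplectic toric manifold along a characteristic submanifold shows that $X_{F,v_1}=\mu_{v_1}^{-1}(F)$ and $X_{F,v_2}=\mu_{v_2}^{-1}(F)$ have the same equivariant symplectic germ. Radially blow up $M_{v_i}$ along $X_{F,v_i}$: this replaces a tubular neighborhood by $Z_{F,v_i}\times[0,\varepsilon)$ carrying a folded form, $Z_{F,v_i}$ being a principal circle bundle over $X_{F,v_i}$; by Condition~2 the fold facets at a fixed node are disjoint, so these blow-ups occur along disjoint submanifolds and do not interfere. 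Now glue, for each edge $e$, the fold boundary $Z_{F,v_1}$ to $Z_{F,v_2}$ via the germ isomorphism; connectedness of $\Gamma$ makes the result a connected closed manifold, the folded forms match into a global origami form with fold $\bigsqcup_e Z_{F,v_e}$ and with null foliations the circle fibers, and the moment maps $\mu_v$, which agree up to the common translation, glue to a moment map for an effective Hamiltonian $T$-action with $\dim T=\tfrac12\dim M$. Hence $M$ is a toric origami manifold.

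\emph{Mutual inverseness, equivalences, and the hard step.} Unfolding the manifold assembled from a template returns the pieces $M_v$ (radial blow-down undoes radial blow-up, by the same local model), their Delzant polytopes, and the gluing graph, hence the template up to affine equivalence; conversely, reassembling from the template extracted from $M$ recovers $M$ up to equivariant origami symplectomorphism — each piece by Delzant's theorem, the regluing dictated by the template, and uniqueness near $\Sing$ by an equivariant Moser argument for the folded normal form — and the left- and right-hand equivalences correspond exactly as in Delzant's theorem applied piecewise. The one step that demands genuine care rather than bookkeeping is the equivariant folded Darboux theorem near $\Sing$ and the consequences drawn from it: that the blow-up/blow-down operations are canonical up to equivariant symplectomorphism (independent of the choices of normal-form coordinates and of the foliation collapse), that the fold is always carried onto a bona fide facet of a bona fide Delzant polytope, and that Conditions~1 and~2 of Definition~\ref{definOrigamiTemplate} are precisely what make the pieces glue into a smooth closed origami manifold. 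Once these are established and Delzant's theorem is granted on each piece, the remainder of the argument is formal.
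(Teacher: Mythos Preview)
The paper does not contain a proof of this theorem. It is stated with attribution to \cite{ca-gu-pi} and used as a black box; immediately after the statement the authors say explicitly ``we do not need the construction of this correspondence in full generality'' and proceed only to give the topological model (Construction~\ref{conModelOrigami}) of the origami manifold built from a template, together with the remark that collars must be attached to recover the moment image exactly. So there is nothing in the paper to compare your argument against.

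For what it is worth, your outline is in the spirit of the proof in the cited reference: symplectic cutting along the fold (their ``unfolding''), Delzant on each piece, and the inverse ``blow-up'' operation to reassemble, with an equivariant Moser/normal-form argument supplying uniqueness. That is the right architecture, though the genuinely delicate points you flag --- canonicity of the cut/blow-up independent of choices, and that the fold maps to an honest facet --- are exactly where the work in \cite{ca-gu-pi} lies and are not themselves proved in your sketch.
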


Let $M_O=M_{(\Gamma,\Psi_V,\Psi_E)}$ be the toric origami manifold
corresponding to origami template $O=(\Gamma,\Psi_V,\Psi_E)$. As
in the case of symplectic toric manifolds, we do not need the
construction of this correspondence in full generality. But we
give a topological construction of the toric origami manifold from
a given origami template.

\begin{con}[Topological model of toric origami manifold]\label{conModelOrigami}
Consider an origami template $\Ot=(\Gamma,\PsiV,\PsiE)$,
$\Gamma=(V,E)$. For each node $v\in V$ the Delzant polytope
$\PsiV(v)\in \Pp_n$ gives rise to a symplectic toric manifold
$M_{\PsiV(v)}$, see construction~\ref{conModelSymplectic}. Now do
the following procedure:
\begin{enumerate}
\item[1] Take a disjoint union of all manifolds $M_{\PsiV(v)}$ for $v\in
V$;

\item[2] For each edge $e\in E$ with distinct endpoints $v_1$ and $v_2$
take an equivariant connected sum of $M_{\PsiV(v_1)}$ and
$M_{\PsiV(v_2)}$ along the characteristic submanifold
$N_{\PsiE(e)}$ (which is embedded in both manifolds by
construction~\ref{conModelQtoric});

\item[3] For each loop $e\in E$ based at $v\in V$ take a real blow
up of normal bundle to the submanifold $N_{\PsiE(e)}$ inside
$M_{\PsiV(v)}$.
\end{enumerate}

Step 2 makes sense because of pt.1 of
Definition~\ref{definOrigamiTemplate}. Indeed, the polytopes
$\PsiV(v_1)$ and $\PsiV(v_2)$ agree near $\PsiE(e)$, thus
$M_{\PsiV(v_1)}$ and $M_{\PsiV(v_2)}$ have equivariantly
homeomorphic neighborhoods around $N_{\PsiE(e)}$, so the connected
sum is well defined. Pt. 2 of
Definition~\ref{definOrigamiTemplate} ensures that surgeries do
not touch each other, so all the connected sums and blow ups can
be taken simultaneously. The smooth manifold obtained by the above
procedure is the origami manifold $M_O$ up to equivariant
diffeomorphism.
\end{con}

\begin{rem}
By definition, the operation of equivariant connected sum consists
in cutting small equal $T^n$-invariant tubular neighborhoods of
$N_{\Psi_E(e)}$ in $M_{\Psi_V(v_1)}$ and $M_{\Psi_V(v_2)}$, and
then gluing the resulting manifolds by identity isomorphism of the
boundaries. The image of the moment map under this operation
becomes smaller. Thus the construction described above is
certainly not enough to prove the classificational theorem. In the
theorem one should not only take a connected sum but also attach
collars of the form $\Sing\times(-\varepsilon,\varepsilon)$ (see
details in \cite{ca-gu-pi}). Nevertheless, both constructions,
with collars and without collars, lead to the same result, up to
equivariant diffeomorphism.
\end{rem}

\begin{ex}\label{exSfour}
Let us construct a toric origami manifold $X$, corresponding to
the origami template, made of two triangles
(Fig.~\ref{fig:templates}, left). The symplectic toric 4-manifold
corresponding to a triangle is known to be the complex projective
plane $\CP^2$. The characteristic submanifold corresponding to the
fold facet is a projective line $\CP^1\subset \CP^2$. Thus, $X$ is
a connected sum of two copies of $\CP^2$ along the line $\CP^1$,
which lies in both. This has a simple geometrical interpretation.
If we consider $\CP^1\subset \CP^2$ as a projective line at
infinity, and denote the tubular neighborhood of this line by
$U(\CP^1)$, then $\CP^2\setminus U(\CP^1)$ is a 4-disk $D^4$. Thus
$X$ is a result of gluing two copies of $D^4$ by the identity
diffeomorphism of the boundary. Thus $X\cong S^4$. The action of
$T^2$ on $S^4$ is also easily described. Consider the space
$\Co^2\times \Ro$, and let $T^2$ act on $\Co^2$ by coordinate-wise
rotations, and trivially on $\Ro$. The unit sphere $S^4\subset
\Co^2\times \Ro$ is invariant under this action. This gives a
required action of $T^2$ on $X\cong S^4$.
\end{ex}

An origami template $\Ot=\temp$ is called orientable if the
template graph $\Gamma$ is bipartite, or, equivalently,
$2$-colorable. It is not hard to prove that the origami template
$\Ot$ is orientable whenever $M_{\Ot}$ is an orientable
manifold~\cite{ampz}.

An origami template $\Ot=\temp$ (and the corresponding manifold
$M_{\Ot}$) is called co\"{o}rientable if $\Gamma$ has no loops
(i.e. edges based at one point). Any orientable template (resp.
toric origami manifold) is co\"{o}rientable, because a graph with
loops is not $2$-colorable. If $M_{\Ot}$ is co\"{o}rientable, then
the action of $T^n$ on $M_{\Ot}$ is locally standard~\cite[lemma
5.1]{ho-pi12}. The converse is also true. If the template graph
has a loop, then the real normal blow up in Step 3 of
construction~\ref{conModelOrigami} implies existence of
$\Zt$-components in stabilizer subgroups. Therefore
non-co\"{o}rientable toric origami manifolds are not locally
standard. In the following we consider only co\"{o}rientable
templates and toric origami manifolds.

\begin{con}[Orbit space of toric origami manifold]\label{conOrbitOfOrigami}
The orbit space $Q=M_{\temp}/T^n$ of a (co\"{o}rientable) toric
origami manifold is a smooth manifold with corners. Its
homeomorphism type can be described as a topological space
obtained by gluing polytopes $\PsiV(v)$ along fold facets. More
precisely,
\begin{equation}\label{eqOrbitOfOrigami}
Q = \bigsqcup_{v\in V} (v,\PsiV(v))\bsl\sim,
\end{equation}
where $(u,x)\sim (v,y)$ if there exists an edge $e$ with endpoints
$u$ and $v$, and $x=y\in \PsiE(e)$. Facets of $Q$ are given by
non-fold facets of polytopes $\PsiV(v)$ identified in the same
way. To make this precise, let us call non-fold facets $F_1\in
\F(\PsiV(v_1))$ and $F_2\in\F(\PsiV(v_2))$ elementary neighboring
w.r.t. to the edge $e\in E$ (with endpoints $v_1$ and $v_2$) if
$F_1\cap\PsiE(e)=F_2\cap\PsiE(e)$. The relation of elementary
neighborliness generates an equivalence relation $\leftrightarrow$
on the set of all non-fold facets of all polytopes $\PsiV(v)$.
Define the facet $[F]$ of the orbit space $Q$ as a union of facets
in one equivalence class:
\begin{equation}\label{eqFacetOfOrigami}
[F] \quad\eqd\quad \bigsqcup_{\mathclap{\substack{v\in V, G\in\F(\PsiV(v)),\\
G \text{ is not fold}, G\leftrightarrow F}}}\,\, (v,G)
\bsl\sim,\qquad [F] \in \F(Q),
\end{equation}
where $\sim$ is the same as in \eqref{eqOrbitOfOrigami}.

Let us define a primitive normal vector to the facet $[F]$ of $Q$
by $\nu([F])\eqd \nu(F)\in\Zo^n/\pm$. It is well defined since
$\nu(F)=\nu(G)$ for $F\leftrightarrow G$.

Note that the relation of elementary neighborliness determines a
connected subgraph $\Gamma_{[F]}$ of $\Gamma$. All facets
$G\leftrightarrow F$ are Delzant and lie in the same hyperplane
$H_{[F]}$. Thus we obtain an induced origami template
\begin{equation}\label{eqInducedTemplate}
\Ot_{[F]}=(\Gamma_{[F]},\PsiV|_{\Gamma_{[F]}}\cap
H_{[F]},\PsiE|_{\Gamma_{[F]}}\cap H_{[F]})
\end{equation}
of dimension $n-1$. In particular, if $\eta\colon M_{\Ot}\to Q$
denotes the projection to the orbit space, then the characteristic
submanifold $\eta^{-1}([F])$ is the toric origami manifold of
dimension $2n-2$ generated by the origami template $\Ot_{[F]}$.

We had defined the facets of the orbit space $Q=M_{\Ot}/T^n$. All
other faces are defined as connected components of nonempty
intersections of facets. On the other hand, faces can be defined
similarly to facets --- by gluing faces of polytopes $\PsiV(v)$
which are neighborly in the same sense as before.
\end{con}

Extending the origami analogy, we can think of the orbit space $Q$
as ``unfolding'' the origami template and then forgetting the
angles adjacent to the former fold facets (remember that we have
to identify neighboring faces!).

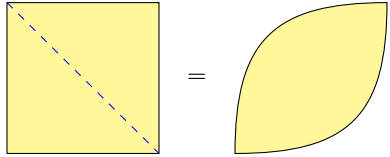
\begin{figure}[h]
\begin{center}
    \begin{tikzpicture}[scale=1]
        \filldraw[fill=yellow!50](0,0)--(2,0)--(2,2)--(0,2)--cycle;
        \draw[dashed, blue] (0,2)--(2,0);
        \node at (2.5,1) {$=$};
        \filldraw[fill=yellow!50] (3,0)..controls (4.5,0) and
        (5,0.5)..(5,2)..controls (3.5,2) and
        (3,1.5)..(3,0)--cycle;
%        \fill (3,0) circle(3pt);
%        \fill (5,2) circle(3pt);
    \end{tikzpicture}
\end{center}
\caption{The orbit space of a manifold $S^4$, corresponding to the
origami template shown on Fig.~\ref{fig:templates}, left.}
\label{fig:orbitspace}
\end{figure}

It is easy to see that the orbit space $Q=M_{\temp}/T^n$ has the
same homotopy type as the graph $\Gamma$, thus $Q$ is either
contractible (when $\Gamma$ is a tree) or homotopy equivalent to a
wedge of circles. This observation shows that whenever the
template graph $\Gamma$ has cycles, the corresponding toric
origami manifold cannot be quasitoric (recall that the orbit space
of quasitoric manifold is a polytope, which is contractible). As
an example, the origami template shown on
Fig.~\ref{fig:templates}, at the right corresponds to the origami
manifold which is not quasitoric.

Since we want to find a quasitoric manifold which is not toric
origami, we need to consider only the cases when the orbit space
is contractible. Thus in the following we suppose $\Gamma$ is a
tree.

%%%%%%%%%%%%%%%%%%%%%%%%%%%%%%%%%%%%%%%%%%%%%%%%%%%%%%%%%%%%%%%%%%
%
%
%
%
%
%
%
%%%%%%%%%%%%%%%%%%%%%%%%%%%%%%%%%%%%%%%%%%%%%%%%%%%%%%%%%%%%%%%%%%

\section{Weighted simplicial cell spheres}\label{sectWeightedSpheres}

In the previous section we have seen that quasitoric manifolds are
encoded by the orbit spaces (which are simple polytopes) and
characteristic functions (which are colorings of facets by
elements of $\Zo^n/\pm$). It will be easier, however, to work with
the dual objects, which we call weighted simplicial spheres. To
some extent this approach is equivalent to multi-fans, used to
study origami manifolds in \cite{ma-pa}, but it is more suitable
for our geometrical considerations.

Recall that a \emph{simplicial poset} or \emph{simplicial cell
complex} \cite{bu-pa04} is a finite partially ordered set $S$ such
that:

(1) There is a unique minimal element $\emptyset\in S$,

(2) For each $I\in S$ the interval subset $[\emptyset, I]\eqd
\{J\in S\mid J\leqslant I\}$ is isomorphic to the poset of faces
of $(k-1)$-dimensional simplex (i.e. Boolean lattice of rank $k$)
for some $k\geqslant 0$. In this case the element $I$ is said to
have rank $k$ and dimension $k-1$.

The elements of $S$ are called simplices and elements of rank $1$
are called vertices. The set of vertices of $S$ is denoted
$\ver(S)$.

A simplicial poset is called pure, if all maximal simplices have
the same dimension. A simplicial poset $S$ is called a simplicial
complex, if for any subset of vertices $\sigma\subseteq\ver(S)$,
there exists at most one simplex whose vertex set is $\sigma$.

\begin{con} It is convenient to visualize simplicial posets using their
geometrical realizations. To define the geometrical realization we
assign the geometrical simplex $\Delta_I$ of dimension $\rk(I)-1$
to each $I\in S$ and attach them together according to the order
relation in $S$. More formally, the geometric realization of $S$
is the topological space
\[
|S|\eqd \bigsqcup\limits_{I\in S} (I,\Delta_I) / \sim,
\]
where $(I_1,x_1)\sim(I_2,x_2)$ if $I_1<I_2$ and $x_1=x_2\in
\Delta_{I_1}\subset \Delta_{I_2}$. See details in \cite{lu-pa}
or~\cite{bu-pa14}.
\end{con}

A simplicial poset $S$ is called a \emph{simplicial cell sphere}
if $|S|$ is homeomorphic to a sphere. $S$ is called a PL-sphere if
it is PL-homeomorphic to the boundary of a simplex. In dimension
2, which is the most important case for us, these two notions are
equivalent. Simplicial complex, whose geometric realization is
homeomorphic to a sphere, is called a \emph{simplicial sphere}.
Thus simplicial sphere is a simplicial cell sphere which is also a
simplicial complex.

\begin{con}\label{conLinkStarSumPosets}
We want to define a connected sum of two simplicial cell spheres
along their vertices. The topological meaning of this operation is
clear: cut the small open neighborhoods of vertices and attach the
boundaries if possible. However, an attempt to define the
connected sum combinatorially for the most general simplicial
posets leads to some technical problems. To keep things
manageable, we exclude certain degenerate situations.

For every $I<J$ in $S$ there is a complementary simplex
$J\smallsetminus I\in S$, since the interval $[\varnothing,J]$ is
identified with the Boolean lattice. In other words,
$J\smallsetminus I$ is the face of $J$ complementary to the face
$I$. Define a \emph{link} of a simplex $I\in S$ as a partially
ordered set $\link_SI=\{J\smallsetminus I\mid J\in S, J\geqslant
I\}$ with the order relation induced from $S$.
%This is a simplicial poset with $\rk(J\smallsetminus I)=\rk(J)-\rk(I)$.
Define an \emph{open star} of a simplex $I\in S$ as a subset
$\staro_SI\eqd\{J\in S\mid J\geqslant I\}$. There is a natural
surjective map of sets $D_I\colon\staro_SI\to\link_SI$ sending $J$
to $J\smallsetminus I$. We call a simplex $I$ \emph{admissible} if
$D_I$ is injective.

\begin{figure}[h]
\begin{center}
    \begin{tikzpicture}[scale=0.9]
        \draw (0,2)..controls (0,0.5) and
        (0.5,0)..(2,0)..controls (2,1.5) and
        (1.5,2)..(0,2)--cycle;
        \fill (2,0) circle(2pt) node[anchor=west] {$a$};
        \fill (0,2) circle(2pt) node[anchor=east] {$b$};
    \end{tikzpicture}
\end{center}
\caption{Example of non-admissibility.} \label{fig:nonadmiss}
\end{figure}
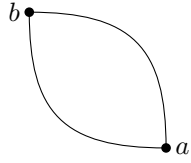

Note that in a simplicial complex every simplex is admissible. An
example of non-admissible simplex is shown on
Fig.~\ref{fig:nonadmiss}. There are two simplices containing the
vertex $a$, and the complement of $a$ in both of them is the same
vertex $b$. Thus $a$ is a non-admissible vertex.

Let us define the connected sum of two simplicial posets $S_1$ and
$S_2$ along admissible vertices. Let $i_1\in S_1$ and $i_2\in S_2$
be admissible vertices, and suppose there exists an isomorphism of
posets $\xi\colon \link_{S_1}i_1\to \link_{S_2}i_2$ (thus an
isomorphism of open stars, by admissibility). Consider a poset
\begin{equation}\label{eqConSumOfPosets}
S_1\csum{i_1}{i_2}S_2\eqd(S_1\smallsetminus \staro_{S_1}i_1)\sqcup
(S_2\smallsetminus \staro_{S_2}i_2)/\sim,
\end{equation}
where $I_1\in\link_{S_1}i_1\subset S_1$ is identified with
$I_2\in\link_{S_2}i_2\subset S_2$ whenever $I_2= \xi(I_1)$. The
order relation on $S_1\csum{i_1}{i_2}S_2$ is induced from $S_1$
and $S_2$ in a natural way. It can be easily checked that the
connected sum $S_1\csum{i_1}{i_2}S_2$ is again a simplicial poset.

If $S_1, S_2$ are simplicial spheres, then so is
$S_1\csum{i_1}{i_2}S_2$. This statement would fail if we do not
impose the admissibility condition.
\end{con}

\begin{rem}
A connected sum of two simplicial complexes may not be a
simplicial complex (Fig.~\ref{fig:consumtriangles}). This is the
main reason why we consider a class of simplicial posets instead
of simplicial complexes.

\begin{figure}[h]
\begin{center}
    \begin{tikzpicture}[scale=1]
        \draw (0,0)--(1.5,1)--(0,2)--cycle;
        \draw (2.5,1)--(4,0)--(4,2)--cycle;
        \draw (5.5,0)..controls (5,0.7) and (5,1.3)..(5.5,2)..controls (6,1.3) and (6,0.7)..(5.5,0)--cycle;
        \node at (2,1) {$\hash$};
        \node at (4.5,1) {$=$};
        \fill (0,0) circle(2pt);
        \fill (0,2) circle(2pt);
        \fill[red] (1.5,1) circle(2pt);
        \fill[red] (2.5,1) circle(2pt);
        \fill (4,0) circle(2pt);
        \fill (4,2) circle(2pt);
        \fill (5.5,0) circle(2pt);
        \fill (5.5,2) circle(2pt);
    \end{tikzpicture}
\end{center}
\caption{The class of simplicial complexes is not closed under
taking connected sums.} \label{fig:consumtriangles}
\end{figure}
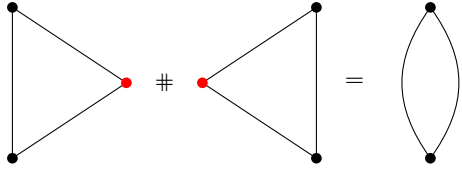

\end{rem}

\begin{defin}
Let $S$ be a pure simplicial poset of dimension $n-1$. A map
$\Lambda\colon\ver(S)\to\Zo^n/\pm$ is called a characteristic
function if, for every simplex $I\in S$ with vertices
$i_1,\ldots,i_n$, the vectors $\Lambda(i_1),\ldots,\Lambda(i_n)$
span $\Zo^n$. The pair $(S,\Lambda)$ is called a weighted
simplicial poset.
\end{defin}

%Lambda and lambda --- different!!!???

\begin{defin}
Let $(S_1,\Lambda_1)$ and $(S_2,\Lambda_2)$ be weighted simplicial
posets. Let $i_1,i_2$ be admissible vertices of $S_1,S_2$ such
that there exists an isomorphism
$\xi\colon\link_{S_1}i_1\to\link_{S_2}i_2$ preserving
characteristic functions:
$(\Lambda_2\circ\xi)|_{\link_{S_1}i_1}=\Lambda_1|_{\link_{S_1}i_1}$.
Then $\Lambda_1,\Lambda_2$ induce the characteristic function
$\Lambda$ on the connected sum $S_1\csum{i_1}{i_2}S_2$. The
weighted simplicial poset $(S_1\csum{i_1}{i_2}S_2,\Lambda)$ is
called a weighted connected sum of $(S_1,\Lambda_1)$ and
$(S_2,\Lambda_2)$.
\end{defin}

\begin{con}
Let $(P,\Lambda)$ be a characteristic pair (see
section~\ref{sectTopology}). Let $K_P=\partial P^*$ be the dual
simplicial sphere to a simple polytope $P$. Since there is a
natural correspondence $\ver(K_P)=\F(P)$ we get the characteristic
function $\Lambda\colon \ver(K_P)\to \Zo^n/\pm$. This defines a
weighted sphere $(K_P,\Lambda)$. In particular, any Delzant
polytope $P$ defines a weighted sphere $(K_P,\nu)$, where $\nu(F)$
is the normal vector to $F\in \F(P)=\ver(K_P)$ modulo sign
(construction~\ref{conModelSymplectic}).
\end{con}

\begin{con}\label{conAugPosetOfOrigami}
Let $\Ot=(\Gamma,\PsiV,\PsiE)$ be an origami template and
$M_{\Ot}$ be the corresponding toric origami manifold. Suppose
that $\Gamma$ is a tree. The orbit space $Q=M_{\Ot}/T^n$ is
homeomorphic to an $n$-dimensional disc. The face structure of $Q$
defines a poset $S_Q$, whose elements are faces of $Q$ ordered by
reversed inclusion (it is easy to show that such poset is
simplicial). In particular, $\ver(S_Q)=\F(Q)$. Normal vectors to
facets of $Q$ (construction~\ref{conOrbitOfOrigami}) determine the
characteristic function $\nu\colon\F(Q)\to \Zo^n/\pm$,
$\nu([F])=\nu(F)$. Thus there is a weighted simplicial poset
$(S_Q,\nu)$ associated with a toric origami manifold $M_\Ot$.
\end{con}

Our next goal is to describe the weighted simplicial cell sphere
$(S_Q,\nu)$ of a toric origami manifold as a connected sum of
elementary pieces, corresponding to Delzant polytopes of the
origami template.

\begin{con}\label{conAugPosetOfOrigamiAsCSum}
If $\Gamma$ is a tree, then the simplicial poset $S_Q$ is the
connected sum of simplicial spheres $K_{\PsiV(v)}$ along vertices,
corresponding to fold facets:
\begin{equation}\label{eqOrigamiPosetAsCsum}
S_Q\cong\bighash_{\Gamma}K_{\PsiV(v)}.
\end{equation}
Let us introduce some notation to make this precise. Let $e$ be an
edge of $\Gamma$, and $v$ be its endpoint. Let $i_{v,e}$ be the
vertex of $K_{\PsiV(v)}$ corresponding to the facet
$\PsiE(e)\subset\PsiV(v)$. Then \eqref{eqOrigamiPosetAsCsum}
denotes the connected sum of all simplicial spheres $K_{\PsiV(v)}$
along vertices $i_{v,e}$, $i_{u,e}$ for all edges $e=\{v,u\}$ of
graph $\Gamma$. This simultaneous connected sum is well defined.
Indeed, if $e_1\neq e_2\in E$ are two edges emanating from $v\in
V$, then the vertices $i_{v,e_1}$ and $i_{v,e_2}$ are not adjacent
in $K_{\PsiV(v)}$ by pt.2 of
Definition~\ref{definOrigamiTemplate}. Therefore, open stars
$\staro_{K_{\PsiV(v)}}i_{v,e_1}$ and
$\staro_{K_{\PsiV(v)}}i_{v,e_2}$, which we remove in
\eqref{eqConSumOfPosets}, do not intersect. Also note that all
vertices $i_{v,e}$ are admissible, since the spheres
$K_{\PsiV(v)}$ are simplicial complexes.

Each sphere $K_{\PsiV(v)}$ comes equipped with a characteristic
function $\nu_v\colon \ver(K_{\PsiV(v)})\to\Zo^n/\pm$, since
$\PsiV(v)$ is Delzant. By pt.1 of
Definition~\ref{definOrigamiTemplate} these characteristic
functions agree on the links which we identify. Therefore we have
an isomorphism of weighted spheres
\begin{equation}\label{eqOrPosetAugIsCSum}
(S_Q,\nu)\cong\bighash_{\Gamma}(K_{\PsiV(v)},\nu_v).
\end{equation}
\end{con}

%%%%%%%%%%%%%%%%%%%%%%%%%%%%%%%%%%%%%%%%%%%%%%%%%%%%%%%%%%%%%%%%%%
%
%
%
%
%
%
%
%%%%%%%%%%%%%%%%%%%%%%%%%%%%%%%%%%%%%%%%%%%%%%%%%%%%%%%%%%%%%%%%%%

\section{Proof of Theorem
\ref{thmQtorNotOrigami}}\label{sectProof}

Suppose that a quasitoric manifold $M_{(P,\Lambda)}$ is
equivariantly homeomorphic to the origami manifold
$M_{(\Gamma,\PsiV,\PsiE)}$. As was mentioned earlier, in this
situation $\Gamma$ is a tree.

First, the orbit spaces should be isomorphic as manifolds with
corners: $P\cong Q=M_{\Ot}/T^n$. Second, $M_{(P,\Lambda)}\congeq
M_{\Ot}$ implies that stabilizers of the torus actions coincide
for the corresponding faces of orbit spaces. Thus characteristic
functions on $P$ and $Q$ taking values in $\Zo^n/\pm$ are the
same. Hence, the weighted simplicial cell spheres $(K_P,\Lambda)$
and $(S_Q,\nu) \cong\bighash_{\Gamma}(K_{\PsiV(v)},\nu)$ are
isomorphic.

So far to prove Theorem~\ref{thmQtorNotOrigami} for $n=3$ it is
sufficient to prove the following statement.

\begin{prop}\label{propThmReform}
There exists a 3-dimensional simple polytope $P$ and a
characteristic function $\Lambda\colon\F(P)\to\Zo^3/\pm$ such that
the dual weighted sphere $(K_P,\Lambda)$ cannot be represented as
a connected sum, along a tree, of weighted spheres dual to Delzant
polytopes.
\end{prop}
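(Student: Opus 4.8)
The plan is to exhibit a simple $3$-polytope $P$ together with a characteristic function $\Lambda$ and then derive a contradiction from the assumption that $(K_P,\Lambda)$ is a connected sum along a tree of weighted spheres dual to Delzant polytopes. First I would set up the bookkeeping: a connected-sum decomposition $(K_P,\Lambda)\cong\bighash_{\Gamma}(K_{\PsiV(v)},\nu_v)$ along a tree $\Gamma$ corresponds, on the polytope side, to cutting $P$ along a family of disjoint ``separating'' simple polygons $\PsiE(e)$ (the fold facets), each of which separates $P$ into pieces, so that each node polytope $\PsiV(v)$ is Delzant. Dually, each fold facet is a vertex of $K_P$ that we split; the ``separating polygon'' structure in $P$ translates to the fact that the fold vertices of $K_{\PsiV(v)}$ are pairwise non-adjacent (pt.~2 of Definition~\ref{definOrigamiTemplate}), and the Delzant condition says each $\PsiV(v)$ has a smooth normal fan, i.e.\ $\nu_v$ is a \emph{non-singular} characteristic function (at every vertex the three vectors form a $\Zo$-basis, not merely a $\Zo$-spanning set). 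Since $\Lambda$ is inherited from the $\nu_v$ on the complement of the fold stars, $\Lambda$ restricted to each ``page'' is non-singular.

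**The obstruction:**

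The key observation is that $\Lambda$ itself need not be non-singular: at a vertex $v$ of $P$ in three facets $F_1,F_2,F_3$, condition \eqref{eqStarCond} only forces $\Lambda(F_1),\Lambda(F_2),\Lambda(F_3)$ to span a direct summand of $\Zo^3$, which for three vectors in $\Zo^3$ means they span $\Zo^3$ as a group, i.e.\ they \emph{do} form a basis --- so actually any quasitoric characteristic function in this dimension is non-singular on genuine vertices. Hence the real obstruction must be \emph{metric/combinatorial}, not local: the fold facets are \emph{disjoint} simple subpolygons of $P$, and after removing their stars each node polytope must \emph{simultaneously} be Delzant. The plan is therefore to choose $P$ and $\Lambda$ so that \emph{every} collection of disjoint separating polygons fails to cut $P$ into Delzant pieces --- concretely, so that some ``unavoidable'' region of $P$ can never be made Delzant no matter how the cuts are placed. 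I would make this precise by a counting/isoperimetric argument: each Delzant polygon in a page of a $3$-dimensional origami template has boundedly many edges relative to some invariant, the disjoint fold polygons partition the facets of $P$ up to the cut data, and if $P$ has too many facets concentrated too densely (dual: $K_P$ has a triangulated region that is too ``thick''), no system of disjoint separating curves of the allowed type can isolate small enough Delzant pieces. This is exactly the ``discrete isoperimetric inequality'' advertised in the abstract.

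**Execution and the hard part:**

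So the concrete steps are: (1) dualize, reducing the statement to the nonexistence of a system of vertex-disjoint ``nice'' separating subcomplexes of $K_P$ whose complementary pieces are all dual to Delzant polytopes; (2) extract from the Delzant condition a numerical constraint on each piece --- the right invariant is presumably the number of vertices of each fold polygon versus the number of facets it bounds, governed by a discrete isoperimetric inequality for planar triangulations; (3) construct $P$ (equivalently, a planar triangulation $K_P$) and a coloring $\Lambda$ violating that constraint for \emph{every} admissible cut system; (4) verify $\Lambda$ satisfies \eqref{eqStarCond} so that $M_{(P,\Lambda)}$ is genuinely a quasitoric manifold. I expect step (3) --- building an explicit $P$ and $\Lambda$ for which \emph{no} tree decomposition works --- to be the main obstacle: one must control \emph{all} possible placements of the disjoint fold polygons simultaneously, and ruling out the combinatorial possibilities is where the planar-triangulation, coloring, and isoperimetric machinery promised in the introduction will have to do the heavy lifting. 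The cleanest route is probably to find a local ``bad configuration'' in $(K_P,\Lambda)$ --- a small colored patch that is not itself Delzant-realizable and cannot be separated off into a single Delzant page by any vertex-disjoint cut --- and then show such a patch must survive in at least one node of any tree decomposition by a pigeonhole/isoperimetric estimate on how much ``area'' the disjoint cuts can reach.
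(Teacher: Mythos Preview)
Your architectural outline is close to the paper's, but there is a genuine gap at step~(2): you never identify the correct numerical constraint that the Delzant condition imposes on each piece. You correctly observe that in dimension~$3$ the local basis condition at vertices is automatic for any characteristic function, and conclude that the obstruction must be ``metric/combinatorial''; but you then guess the wrong invariant (perimeter of a fold polygon versus the area it bounds) and end by proposing to look for a ``small colored patch that is not itself Delzant-realizable''. That is heading in the wrong direction.

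The missing observation is this: a Delzant polytope is in particular \emph{convex}, and the outward normal vectors to the facets of a convex polytope are pairwise distinct in $\Zo^n$; hence modulo sign there are at least half as many distinct normal classes as facets. Now if $(K_P,\Lambda)\cong\bighash_{\Gamma}(K_{\PsiV(v)},\nu_v)$, then on each region $R_v\subset K_{\PsiV(v)}$ the function $\Lambda$ agrees with the normal map $\nu_v$, so $|\Lambda(\ver R_v)|=|\nu_v(\ver R_v)|\geqslant |\ver R_v|/2$. If $\Lambda$ takes only $r$ values on all of $K_P$, this forces $|\ver R_v|\leqslant 2r$ for every node $v$. \emph{That} is the constraint on each piece. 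The Four Color Theorem then supplies, for any simplicial $2$-sphere, a characteristic function with $r\leqslant 4$, so every Delzant piece can have at most $8$ facets.

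With this in hand, your step~(3) becomes precise: one needs a simplicial $2$-sphere whose \emph{fatness} exceeds~$8$, i.e.\ one admitting no slicing into regions of at most $8$ vertices each. This is indeed the hard part, and the paper handles it by an isoperimetric argument of the kind you anticipate: equip $|K|$ with the piecewise-flat metric, map it to a round sphere with controlled Lipschitz constants, use the spherical isoperimetric inequality to show that short separating cycles bound small areas, and conclude that a sufficiently fine subdivision of $\partial\Delta^3$ has too many vertices to be sliced with width $\leqslant 8$. Your closing ``local bad patch'' idea would not work: the obstruction is global (a region that is too \emph{large}), and in the actual example there is nothing locally wrong with the coloring anywhere.
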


The proof of this proposition takes most part of this section. We
proceed by steps. At first notice that any simplicial 2-sphere is
dual to some simple 3-polytope by Steinitz's theorem (see e.g.
\cite{zieg}). So it is sufficient to prove that there exists a
weighted 2-dimensional simplicial sphere $K$ which cannot be
represented as a connected sum of weighted spheres dual to Delzant
polytopes.

\begin{figure}[h]
\begin{center}
\includegraphics[scale=0.14]{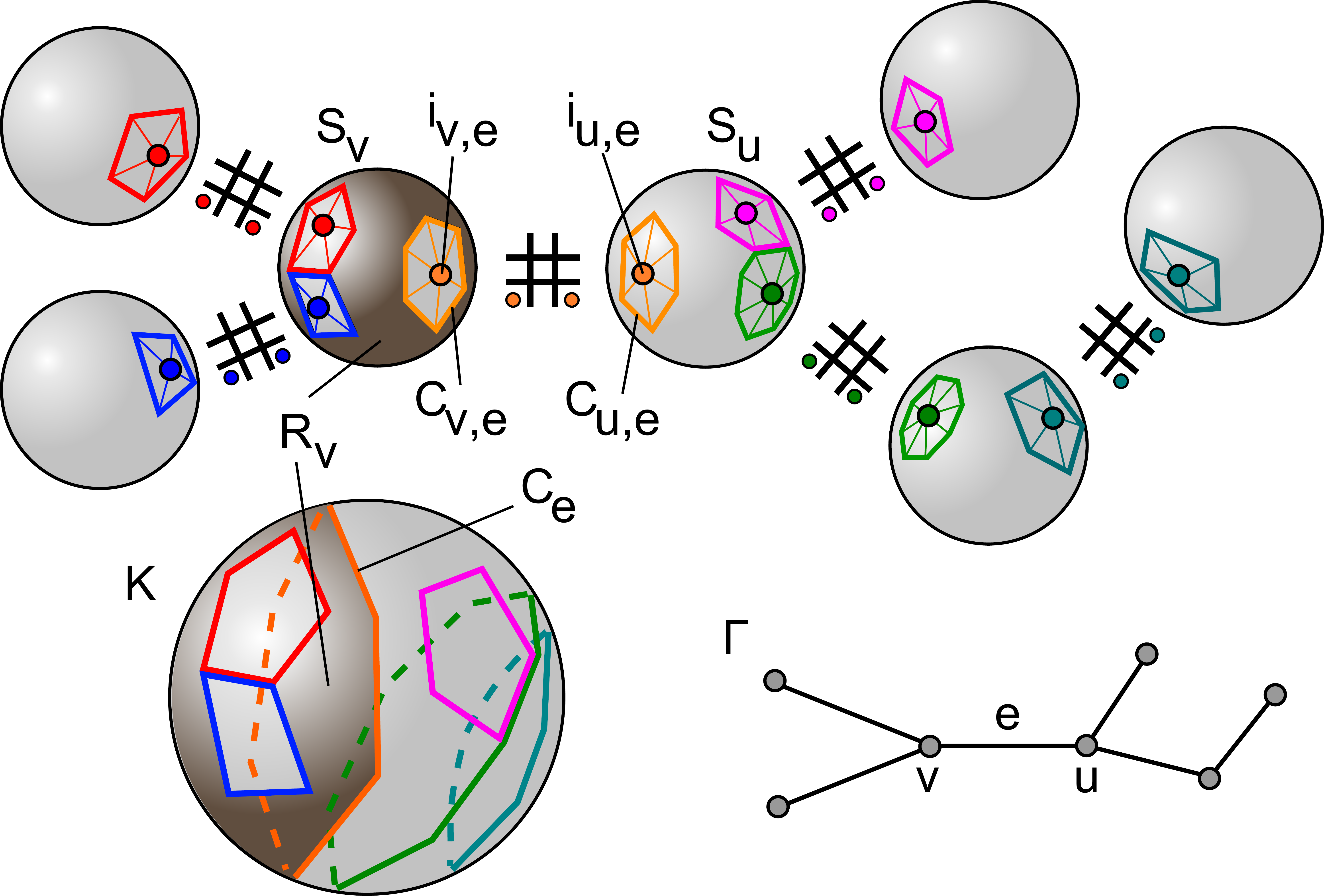}
\end{center}
\caption{Connected sum of spheres along a tree}
\label{pictConSumGraph}
\end{figure}

\begin{con}\label{conSlicingRegions}
We introduce some notation in addition to that of
construction~\ref{conAugPosetOfOrigamiAsCSum}, see
Fig.~\ref{pictConSumGraph}. As before, let $\Gamma=(V,E)$ be a
tree. Suppose that a simplicial cell $(n-1)$-sphere $S_v$ is
associated with each node $v\in V$, and for each edge $e\in E$
with an endpoint $v\in V$ there is an admissible vertex
$i_{v,e}\in S_v$ subject to the following conditions: (1)
$\link_{S_v}i_{v,e}$ is isomorphic to $\link_{S_u}i_{u,e}$ for any
edge $e$ with endpoints $v,u$; (2) Vertices $i_{v,e_1}, i_{v,e_2}$
are different and not adjacent in $S_v$ for any two edges $e_1\neq
e_2$ emanating from $v$. Then we can form a connected sum along
$\Gamma$ as in construction~\ref{conAugPosetOfOrigamiAsCSum}:
$K=\bighash_{\Gamma}S_v$. For each $v\in V$ consider the
simplicial subposet
\begin{equation}\label{eqRegionDefin}
R_v=S_v\bsm\bigsqcup_{e\in E, v\in e} \staro_{S_v}i_{v,e}.
\end{equation}
This subposet will be called a region. Denote $\link_{S_v}i_{v,e}$
by $C_{v,e}$. By construction, $C_{v,e}$ is attached to $C_{u,e}$
if $e=\{v,u\}$. The resulting $(n-2)$-dimensional simplicial
subposet of $K$ is denoted by $C_e$. Since $i_{v,e}$ is
admissible, the subposet $C_e\cong C_{v,e}=\link_{S_v}i_{v,e}$ is
a homological $(n-2)$-sphere as follows from a standard argument
in combinatorial topology (see, e.g.,
\cite[Prop.2.2.14]{bu-pa14}).

We get a collection of $(n-2)$-dimensional cycles $C_e, e\in E$,
dividing the $(n-1)$-sphere $K$ into regions $R_v, v\in V$. If
$e=\{v,u\}$, then $R_v$ and $R_u$ share a common border $C_e$.
Note that cycles $C_e$ are mutually ordered, meaning that each
$C_e$ lies at one side of any other cycle. Though the cycles may
have common points (as schematically shown on
Fig.~\ref{pictConSumGraph}) and even coincide (in this case the
region between them coincides with both of them).

On the other hand, any collection of mutually ordered
$(n-2)$-dimensional spherical cycles in $K$ determines the
representation of $K$ as a connected sum of smaller simplicial
cell spheres. A representation $K=\bighash_{\Gamma}S_v$ will be
called a \emph{slicing}.

Define the \emph{width} of a slicing $\Theta$ to be the maximal
number of vertices in its regions:
\begin{equation}\label{eqWidthOfSlicing}
\wid(\Theta)\eqd\max\{|\ver(R_v)|\mid v\in V\}.
\end{equation}

Define the \emph{fatness} of a sphere $K$ as the minimal width of
all its possible slicings:
\begin{equation}\label{eqFatnessDefin}
\ft(K)\eqd\min\{\wid(\Theta)\mid \Theta \mbox{ is a slicing of }
K\}.
\end{equation}
\end{con}

The essential idea in the proof of Proposition~\ref{propThmReform}
is the following.

\begin{lemma}\label{lemBigFatSmalCol}
Let $K$ be an $(n-1)$-dimensional simplicial cell sphere and
$\Lambda\colon\ver(K)\to\Zo^n/\pm$ a characteristic function. Let
$r$ denote the number of different values of this characteristic
function, $r=|\Lambda(\ver(K))|$. Suppose that $\ft(K)>2r$. Then
$(K,\Lambda)$ cannot be represented as a connected sum, along a
tree, of simplicial spheres dual to Delzant polytopes.
\end{lemma}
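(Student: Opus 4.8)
The plan is to argue by contradiction. Suppose $(K,\Lambda)$ is a connected sum, along a tree $\Gamma=(V,E)$, of weighted spheres $(K_{\PsiV(v)},\nu_v)$ dual to Delzant polytopes $\PsiV(v)$. This gives a slicing $\Theta$ of $K$ with regions $R_v$ and dividing cycles $C_e$. Since $\ft(K)>2r$, the width of $\Theta$ exceeds $2r$, so there is at least one node $v_0\in V$ whose region has $|\ver(R_{v_0})|>2r$. I want to derive a contradiction from the Delzant condition on $\PsiV(v_0)$, which translates into the statement that for every maximal simplex $I$ of $K_{\PsiV(v_0)}$ (equivalently every vertex of the Delzant polytope), the $n$ vectors $\nu(i_1),\dots,\nu(i_n)$ form a \emph{basis} of $\Zo^n$ — not merely a spanning set, but a unimodular one.

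The key step is to exploit the smooth (Delzant) structure of $\PsiV(v_0)$ by reducing modulo $2$. Consider the composite $\bar\Lambda\colon\ver(K_{\PsiV(v_0)})\to\Zo^n/\pm\to (\Zt)^n$. Each facet normal vector $\nu(F)$, being primitive, has a nonzero reduction mod $2$, and since at each vertex of the Delzant polytope the normals form a $\Zo$-basis, their mod-$2$ reductions form a basis of $(\Zt)^n$; in particular the $n$ reductions around any vertex are distinct. So $\bar\Lambda$ is a proper coloring of the dual graph (the $1$-skeleton of $K_{\PsiV(v_0)}$, restricted to its region) by the $2^n-1$ nonzero vectors of $(\Zt)^n$. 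But the global characteristic function $\Lambda$ on $K$ takes only $r$ distinct values, so its mod-$2$ reduction takes at most $r$ distinct values in $(\Zt)^n$. This means the vertices of $R_{v_0}$ — of which there are more than $2r$ — are colored by $\bar\Lambda$ using at most $r$ colors. I then want to invoke a pigeonhole/planarity-style obstruction: in the simplicial $2$-sphere $K_{\PsiV(v_0)}$ (the case $n=3$ is the one that matters for Proposition~\ref{propThmReform}, but the argument should be stated for general $n$), having more than $2r$ vertices in a region but only $r$ colors available for a proper coloring forces two vertices of the \emph{same} color to be adjacent, contradicting that $\bar\Lambda$ is a proper coloring. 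The precise count "$>2r$ vertices $\Rightarrow$ some color used more than twice $\Rightarrow$ monochromatic edge" needs the structure of the region: two vertices sharing a color and both lying in a simplex of $R_{v_0}$ would have to be non-adjacent, but one shows that a region with enough vertices cannot have a color class of size $\geqslant 3$ that is an independent set whose removal still leaves things consistent — more carefully, one uses that the region is a disc-like piece of a $2$-sphere and a classical fact about colorings of planar triangulations.

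The main obstacle I expect is pinning down exactly this last combinatorial implication — making precise why "$2r$" rather than "$r$" is the right threshold, and why it suffices. The factor of $2$ should come from the observation that a color class in a proper coloring of a triangulated disc can contain at most two vertices on the boundary cycle $C_{v_0,e}$ (or at most two vertices total that are "separated" in the right sense), or alternatively from an Euler-characteristic count on the region. I would handle this by first isolating the purely graph-theoretic lemma: if a triangulated $2$-sphere (or the relevant region) is properly vertex-colored with $r$ colors, then each region in any slicing has at most $2r$ vertices, or contrapositively a region with $>2r$ vertices cannot be a sub-complex of such a colored sphere. One should also verify the reduction step works even though $\Lambda$ is valued in $\Zo^n/\pm$ rather than $\Zo^n$: the ambiguity of sign disappears mod $2$, so $\bar\Lambda$ is genuinely well-defined, and "span a direct summand" for a Delzant vertex becomes "form a basis of $(\Zt)^n$", which is what is needed. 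Assembling these: more than $2r$ vertices in $R_{v_0}$, a proper $(\Zt)^n$-coloring coming from the Delzant structure, but only $r$ colors globally available, contradicting the graph-theoretic lemma — this completes the proof.
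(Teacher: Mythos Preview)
Your setup is correct through the point where you locate a node $v_0$ with $|\ver(R_{v_0})|>2r$ and observe that on $R_{v_0}$ the global function $\Lambda$ agrees with the normal map $\nu$ of the Delzant polytope $\PsiV(v_0)$. But the mod-$2$ reduction step does not lead to a contradiction, and the ``graph-theoretic lemma'' you are searching for is simply false. The Delzant condition, after reducing mod $2$, gives only a \emph{proper} vertex-coloring of $K_{\PsiV(v_0)}$ by nonzero elements of $(\Zt)^n$; it says nothing about global injectivity. A properly $r$-colored simplicial $2$-sphere can have arbitrarily many vertices: take any large triangulation of $S^2$ and $4$-color it by the Four Color Theorem. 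So ``$>2r$ vertices with only $r$ colors'' cannot by itself force a monochromatic edge, and no Euler-characteristic or boundary-cycle argument will rescue this, because the statement is already false for ordinary planar triangulations.

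The missing idea is \emph{convexity}, not coloring. For a convex polytope $\PsiV(v_0)$ the outward primitive normal vectors $\tilde\nu(F)\in\Zo^n$ to distinct facets are genuinely distinct vectors (two facets with the same outward normal would lie in the same supporting hyperplane). Hence $\tilde\nu$ is injective on $\ver(R_{v_0})$, and passing to $\Zo^n/\pm$ can at most identify vectors in pairs, so $|\nu(\ver(R_{v_0}))|\geqslant|\ver(R_{v_0})|/2>r$. Since $\nu=\Lambda$ on $R_{v_0}$, this contradicts $|\Lambda(\ver(K))|=r$. That is where the factor $2$ comes from: it is the size of the sign group, not any combinatorial feature of the region.
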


\begin{proof}
Assume the converse. Then $(K,\Lambda)\cong
\bighash_{\Gamma}(K_{\PsiV(v)},\nu_v)$, where $\PsiV(v)$ are
Delzant polytopes. Forgetting characteristic functions gives a
slicing $\Theta$ of $K$. The width of every slicing of $K$ is
greater than $2r$ by the definition of fatness. In particular,
$\wid(\Theta)>2r$. Thus there exists a node $v$ of $\Gamma$ such
that $|\ver(R_v)|>2r$.

The region $R_v$ is a subcomplex of $K_{\PsiV(v)}$. The
restriction of $\Lambda$ to the subset $\ver(R_v)$ coincides with
the restriction of $\nu \colon \ver(K_{\PsiV(v)})\to \Zo^n/\pm$ to
$\ver(R_v)$. Recall, that $\tilde{\nu}(F)\in \Zo^n$ is the outward
normal vector to the facet $F\in\F(\PsiV(v))=\ver(K_{\PsiV(v)})$,
and $\nu(F)\in \Zo^n/\pm$ is its class modulo sign. The outward
normal vectors to facets of a convex polytope are mutually
distinct, thus $|\tilde{\nu}(\ver{R_v})|=|\ver(R_v)|$ and,
therefore, $|\nu(\ver(R_v))|\geqslant |\ver(R_v)|/2$. Thus
$|\Lambda(\ver(R_v))|=|\nu(\ver(R_v))|>r$, --- the contradiction,
since $r$ is the total number of values of~$\Lambda$.
\end{proof}

So far we may find counterexamples to origami realizability among
polytopes, which are $\Zo^n$-colored with a small number of
colors, but whose dual simplicial spheres have large fatness. Of
course such examples do not appear when $n=2$ --- this would
contradict Theorem~\ref{thm4dim}. A simplicial 1-sphere is a cycle
graph $\mathcal{C}_k$. By considering diagonal triangulations of a
$k$-gon, one can easily check that $\mathcal{C}_k$ can be
represented as a connected sum of several cycle graphs of the form
$\mathcal{C}_4$ or $\mathcal{C}_5$, giving the slicing of width
$3$. Hence fatness of any $1$-dimensional simplicial sphere is at
most $3$, while any characteristic function takes at least $2$
values, so the conditions of Lemma~\ref{lemBigFatSmalCol} are not
satisfied if $n=2$.

The existence of 2-spheres satisfying conditions of
Lemma~\ref{lemBigFatSmalCol} is thus our next and primary goal. At
first, we prove that any $2$-sphere admits a characteristic
function with few values.

\begin{lemma}\label{lemFourColor}
Any simplicial 2-sphere $K$ admits a characteristic function
$\Lambda\colon\ver(K)\to\Zo^3/\pm$ such that
$|\Lambda(\ver(K))|\leqslant 4$.
\end{lemma}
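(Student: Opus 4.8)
The plan is to build the characteristic function $\Lambda$ from a proper $4$-coloring of the vertices of $K$ together with a suitable choice of integer vectors in $\Zo^3$ assigned to the four colors. The dual statement is that the simple $3$-polytope $P$ with $\partial P^* = K$ has its facets $4$-colorable so that any three facets meeting at a vertex receive three distinct colors; equivalently the $1$-skeleton of $K$ (which triangulates $S^2$) is properly $4$-vertex-colorable. This is exactly the Four Color Theorem applied to the planar triangulation $K$ (every simplicial $2$-sphere is a planar graph, and a triangulation of the sphere is in particular a plane graph to which the Four Color Theorem applies). So first I would invoke the Four Color Theorem to obtain a coloring $c\colon\ver(K)\to\{1,2,3,4\}$ with $c(i)\neq c(j)$ whenever $i,j$ are joined by an edge, hence whenever $i,j$ lie in a common $2$-simplex.

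Next I would fix four vectors $e_1,e_2,e_3,e_4\in\Zo^3$, namely the three standard basis vectors and $e_4 = -(e_1+e_2+e_3)$ (or, equally well, $e_1+e_2+e_3$), and define $\Lambda(i)\eqd [e_{c(i)}]\in\Zo^3/\pm$. Then $|\Lambda(\ver(K))|\leqslant 4$ is immediate. It remains to check the characteristic condition: for every $2$-simplex $I$ of $K$ with vertices $i_1,i_2,i_3$, the vectors $\Lambda(i_1),\Lambda(i_2),\Lambda(i_3)$ span $\Zo^3$. By the coloring property these are three \emph{distinct} elements of the four-element set $\{[e_1],[e_2],[e_3],[e_4]\}$, so they are, up to sign, three of the four vectors $e_1,e_2,e_3,e_4$. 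The key elementary fact is that \emph{any} three of these four vectors form a $\Zo$-basis of $\Zo^3$: the three standard basis vectors obviously do, and replacing any one $e_j$ ($j\leqslant 3$) by $e_4 = -(e_1+e_2+e_3)$ is a unimodular change of basis (the transition matrix is triangular with $\pm1$ on the diagonal after reordering). Since spanning $\Zo^3$ is insensitive to signs of the generators, the condition holds for $\Lambda$.

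The only genuine input here is the Four Color Theorem; everything else is a one-line linear-algebra verification. One small point worth stating carefully is that $K$ is a simplicial \emph{complex} (as hypothesized), so its $1$-skeleton is an honest simple graph with no loops or multiple edges, and the Four Color Theorem applies verbatim; and that a triangulation of $S^2$ contains no separating structure that would obstruct properness of the induced facet coloring of $P$ — but this is automatic, since in a simplicial complex two vertices lie in a common maximal simplex only if they are adjacent. Thus the main (and essentially the sole) obstacle is that the proof rests on the Four Color Theorem; there is no elementary combinatorial substitute, and indeed if one only had a proper $5$-coloring the vector assignment would fail because five vectors in $\Zo^3$ cannot be chosen so that every triple is a basis.
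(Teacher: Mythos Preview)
Your proposal is correct and follows essentially the same approach as the paper: invoke the Four Color Theorem to properly $4$-color $\ver(K)$, then assign the standard basis vectors and $(1,1,1)$ (your $e_4=\pm(e_1+e_2+e_3)$) to the colors, noting that any three of these four vectors form a $\Zo$-basis. The paper's proof is just a terser version of yours; your extra remarks (on $K$ being a genuine simplicial complex so the $1$-skeleton is a simple planar graph, and on the unimodularity of the transition matrices) are sound and merely spell out what the paper leaves implicit.
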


\begin{proof}
Four color theorem states that there exists a proper
vertex-coloring: $\ver(K)\to
\{\alpha_1,\alpha_2,\alpha_3,\alpha_4\}$. Now replace colors by
integral vectors $\alpha_1\mapsto (1,0,0)$, $\alpha_2\mapsto
(0,1,0)$, $\alpha_3\mapsto (0,0,1)$, $\alpha_4\mapsto (1,1,1)$.
Any three of these four vectors span the lattice. Therefore the
map $\Lambda\colon \ver(K)\to \Zo^3/\pm$ thus obtained is a
characteristic function. It takes at most $4$ values.
\end{proof}

\begin{rem}
This is a standard trick in toric topology. Classically, it is
applied to prove that any simple 3-polytope admits a quasitoric
manifold~\cite{da-ja}.
\end{rem}

Though for our purpose we just need $2$-spheres $K$ with
$\ft(K)=9$, it seems intuitively clear that in dimension $2$ and
higher there exist spheres of arbitrarily large fatness. But it is
not a priori clear how to describe such spheres explicitly in
combinatorial terms. We present one possible approach below, but
some steps of our construction do not generalize to dimensions
greater than $2$.

\begin{prop}\label{propExistFatPoly}
For any $N>0$ there exists a simplicial 2-sphere $K$ such that
$\ft(K)>N$.
\end{prop}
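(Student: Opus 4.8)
The plan is to build $K$ as a large triangulated $2$-sphere whose intrinsic metric geometry forces any splitting cycle to cut off a region with many vertices. Fatness measures the smallest possible ``largest piece'' over all slicings by mutually ordered separating cycles, so to make $\ft(K)>N$ I want a sphere in which \emph{every} separating cycle of small combinatorial length necessarily has a large amount of the sphere on at least one of its two sides. The natural quantity to control is an isoperimetric-type inequality: if separating cycles with few vertices can only enclose few vertices, then no slicing can have all regions small, because the regions, together with the border cycles $C_e$, exhaust all $|\ver(K)|$ vertices of $K$, and a slicing of width $w$ along a tree with $|V|$ nodes covers at most on the order of $|V|\cdot w$ vertices while using $|V|-1$ cycles each of bounded size.

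The key steps, in order, are as follows. First I would fix the combinatorial framework: a slicing of $K$ is a tree $\Gamma$ together with mutually ordered spherical $1$-cycles $C_e$ (here $1$-cycles are just cycle subgraphs, since $n-2=1$) subdividing $K$ into regions $R_v$; each $C_e$ is the common link of a pair of glued vertices, hence is a cycle graph, and crucially \emph{each $C_e$ bounds on exactly one side a region that, after collapsing, is a smaller simplicial cell sphere}. Second, I would observe that the vertices of $K$ are partitioned (up to the shared border vertices, which number $O(|E|)$) among the regions $R_v$, so $|\ver(K)| \le \sum_v |\ver(R_v)| \le |V|\cdot\wid(\Theta)$; thus a small-width slicing requires a tree with many nodes, hence many cutting cycles $C_e$. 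Third — the heart of the argument — I would construct $K$ with a \emph{discrete isoperimetric inequality}: there is a constant $c$ so that any cycle subgraph $C$ in $K$ of length $\ell$ separates the sphere into two parts, the smaller of which has at most $c\,\ell$ vertices (or some comparable polynomial bound), \emph{unless} $\ell$ is already large. For instance, one can take $K$ dual to a fine triangulation of the $2$-sphere obtained from a large planar grid-like graph (e.g. a portion of the triangular lattice wrapped onto $S^2$), where short cycles are genuinely ``local'' and bound small disks. Fourth, I would combine these: in any slicing of width $w\le N$, every region has at most $N$ vertices, so its boundary cycles have bounded length (a simplicial disk on at most $N$ vertices has boundary of length $< N$); but then the isoperimetric inequality forces the region cut off by each $C_e$ on its ``small side'' to be tiny — and following the tree structure outward, one reaches a contradiction: the total vertex count cannot be assembled from $O(\text{number of regions})$ small pieces glued along short cycles once $K$ is chosen large enough relative to $N$ and $c$. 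Finally, I would record that since this construction produces $2$-dimensional simplicial \emph{complexes} (triangulations of $S^2$), Steinitz's theorem makes them dual to genuine simple $3$-polytopes, so the statement as phrased for simplicial $2$-spheres is enough.

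The main obstacle will be step three: proving the discrete isoperimetric inequality in the precise form needed. The subtlety is that a slicing cycle $C_e$ need not be embedded in the naive sense — as the text notes, cutting cycles may share points or even coincide — and ``mutually ordered'' is weaker than ``disjoint,'' so I must argue with the nested structure rather than assuming the $C_e$ are pairwise disjoint simple cycles. The right bookkeeping is to process the tree $\Gamma$ from its leaves inward: a leaf region $R_v$ is a simplicial disk on at most $w$ vertices bounded by a single cycle $C_e$ of length $< w$, so by the isoperimetric bound $|\ver(R_v)|$ is controlled in terms of that length, which is itself controlled by $w$; peeling leaves one at a time and tracking how the remaining ``uncut'' portion shrinks gives a bound $|\ver(K)| \le g(w)\cdot|V|$ for an explicit function $g$, and then choosing $K$ with $|\ver(K)|$ larger than $g(N)$ times the number of regions any width-$\le N$ slicing could possibly use yields the contradiction. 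Getting the isoperimetric constant explicit enough that the final counting closes — in particular keeping the bound \emph{linear} in the cycle length so that bounded-width regions force bounded-size enclosed pieces — is where the real work, and probably an appeal to a known planar-graph isoperimetric estimate or an explicit lattice-based construction, will be needed.
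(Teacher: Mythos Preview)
Your overall strategy---build a large, metrically round triangulation of $S^2$, use an isoperimetric inequality to control how much a short cycle can enclose, and then argue that a slicing of small width is impossible---matches the paper's. But the bookkeeping step you describe does not close, and the gap is essential.

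Your leaf-peeling yields a bound of the form $|\ver(K)|\le g(w)\cdot|V|$, and you then propose to ``choose $K$ with $|\ver(K)|$ larger than $g(N)$ times the number of regions any width-$\le N$ slicing could possibly use.'' But there is no a~priori upper bound on $|V|$: the number of regions in a slicing is not bounded in terms of $N$ alone, it grows with the sphere. (Already in dimension~$1$, a cycle on $k$ vertices has slicings of width $\le 5$ with roughly $k/3$ regions.) So the inequality $|\ver(K)|\le g(N)\cdot|V|$ gives only a \emph{lower} bound on $|V|$, which is useless for a contradiction. What you need is an upper bound on $|\ver(K)|$ depending only on $N$ and the fixed geometric constants of $K$, independent of the slicing. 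The paper obtains exactly that, and not by peeling leaves: it orients each edge $e$ of $\Gamma$ toward the side of $C_e$ that the isoperimetric inequality certifies is small (area $\le A=A(N)$), finds a \emph{source} node $u$ in this orientation, and then bounds the degree of $u$ by an Euler-characteristic count (a planar region on $\le N$ vertices has at most $2(N-2)$ faces, hence $\deg u\le B=2(N-2)$). Since every branch hanging off $u$ lies on a small side, $|\ver(K)|\le N+B\cdot A$, a bound with no $|V|$ in it. That source-plus-degree argument is the missing idea.

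A secondary point: your hope for a \emph{linear} isoperimetric bound (smaller side has $\le c\ell$ vertices) is unrealistic for the grid-like spheres you want to use. On a subdivided tetrahedron with $\Theta(q^2)$ vertices, an ``equatorial'' cycle of length $\Theta(q)$ cuts the sphere into two halves each of size $\Theta(q^2)$, so the correct bound is quadratic, $O(\ell^2)$, exactly as the paper derives by transporting the spherical isoperimetric inequality through a bi-Lipschitz map $|K|\to\Ss_R$. With the quadratic bound the source-node argument still works; with a linear bound your peeling might have worked, but you will not get a linear bound from a round construction.
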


\begin{figure}[h]
\begin{center}
\includegraphics[scale=0.15]{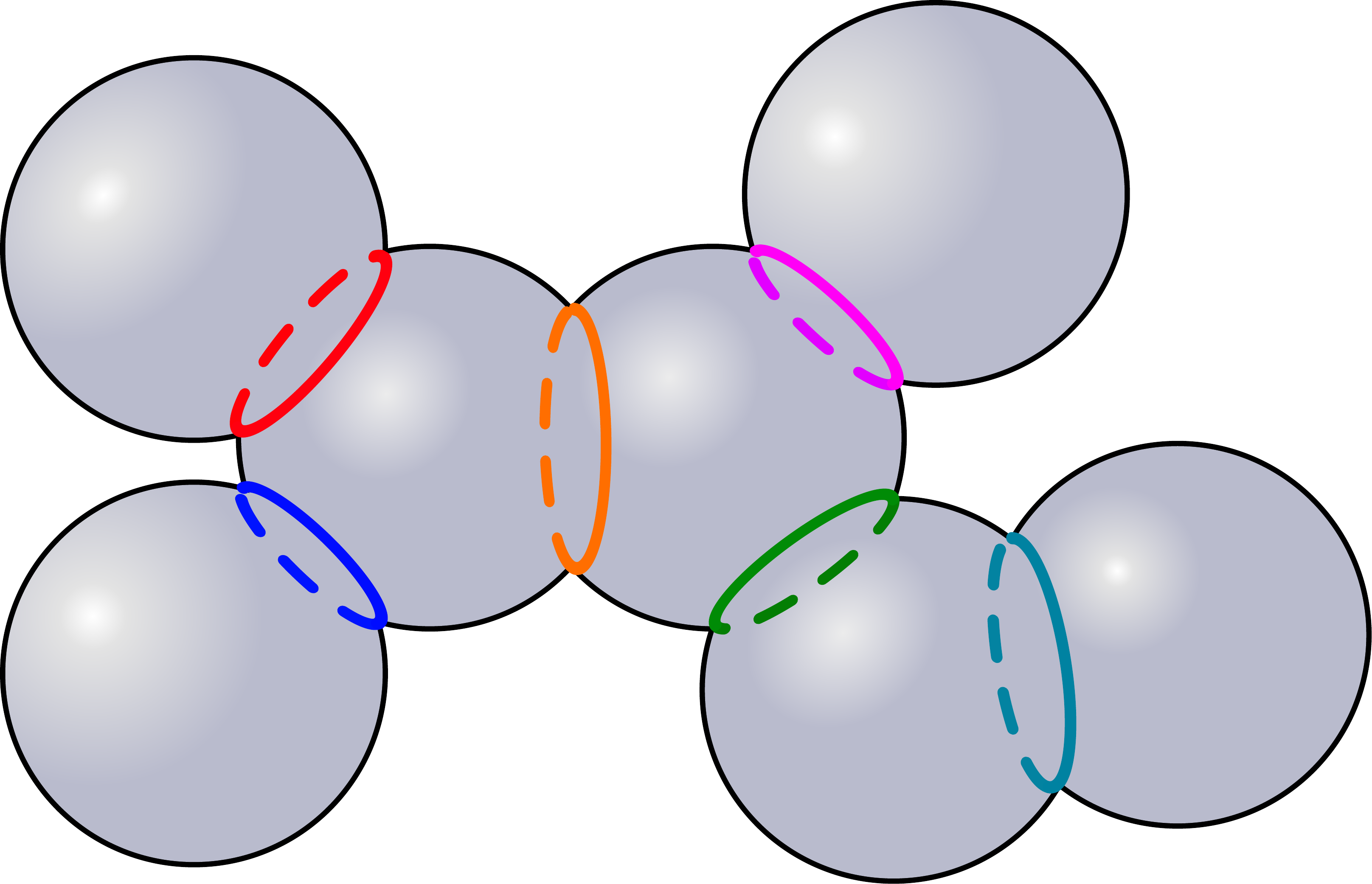}
\end{center}
\caption{Metric features of a ``thin'' simplicial sphere}
\label{pictCactus}
\end{figure}

\begin{proof}
The underlying idea is the following. Suppose that a $2$-sphere
$K$ is ``thin'' i.e. $\ft(K)\ll |\ver(K)|$. Then there exists a
slicing $\Theta$ of $K$ into pieces with small numbers of
vertices. In particular, the discrete length of any cycle $C_e$ in
a slicing $\Theta$ should be small. Then the sphere $K$ is
``tightened'', like the one shown on Fig.~\ref{pictCactus}. It has
the feature that small cycles can bound large areas. To measure
this property, we introduce a natural metric on $|K|$ in which all
edges have length $1$, and then compare the metric space $|K|$
with a ($2$-dimensional) round sphere $\Ss$ of a constant radius.
If there is a bijection $|K| \leftrightarrows \Ss$ with close
Lipschitz constants, then $|K|$ is not thin. The reason why $\Ss$
suits well for this consideration is that small curves on $\Ss$
cannot bound large areas, which follows from the isoperimetric
inequality.

Quite similar considerations and ideas are used in the theory of
planar separators. Some results of this theory can be used to
prove Proposition~\ref{propExistFatPoly} directly. If $G$ is a
planar graph with $k$ vertices, it is known that there exists a
set of $O(\sqrt{k})$ vertices which separates $G$ into two parts
of roughly equal size (such separating sets are called planar
separators). It is also known that the asymptotic $O(\sqrt{k})$ is
the best possible for planar separators~\cite{dj}. If all
$2$-spheres were ``thin'', then every planar graph would have a
separator of size, bounded by some constant, which contradicts the
aforementioned asymptotic.

Anyway the deduction of Proposition \ref{propExistFatPoly} from
the known theory requires some additional work, so we give an
independent proof. Now that we described the intuitive idea beyond
our approach let us get to technical work.

\begin{con}\label{conMetMeasOnK}
Let $K$ be a $2$-dimensional simplicial complex. Define a
(piecewise Riemannian) metric $g$ and measure $\mu$ on $|K|$ in
such a way that each triangle $|I|\subset |K|$ becomes an
equilateral Euclidian triangle with the standard metric and edge
length $1$. Thus the area of each triangle is $\sqrt{3}/4$.

Let $L(\gamma)$ denote the length of a piecewise smooth curve
$\gamma$ in $|K|$. If $C\subset K$ is a closed $1$-dimensional
cycle (simplicial subcomplex), then, obviously,
\begin{equation}\label{eqDiscrLength}
L(|C|) = |\ver(C)|.
\end{equation}
A cycle $C$ divides $K$ into two subcomplexes $K_+$ and $K_-$,
each homeomorphic to a closed $2$-disc (we suppose $C\subset
K_+,K_-$). Let us estimate the number of vertices in $K_-$ in
terms of its area ($K_+$ is similar). Let
$\mathcal{V}_-,\mathcal{E}_-,\mathcal{T}_-$ denote the number of
vertices, edges and triangles in $K_-$. By the definition of
measure, $\mathcal{T}_-=\frac{4}{\sqrt{3}}\mu(|K_-|)$. We have
$\mathcal{V}_--\mathcal{E}_-+\mathcal{T}_-=1$ (Euler
characteristic of $K_-$) and $\mathcal{E}_-<3\mathcal{T}_-$ (by
counting pairs $e\subset t$, where $e$ is an edge and $t$ is a
triangle). Therefore,
\begin{equation}\label{eqDiscrArea}
\mathcal{V}_-\leqslant \frac{8}{\sqrt{3}}\mu(|K_-|).
\end{equation}
\end{con}

Let $\Ss_R$ be a $2$-dimensional round sphere of radius $R$, with
the standard metric $g_s$ and measure $\mu_s$. A piecewise smooth
closed curve $\gamma\subset \Ss_R$ without self-intersections
divides $\Ss_R$ into two regions $A_+$, $A_-$. The isoperimetric
inequality on a sphere (see e.g.~\cite[Ch.4]{os}) has the form

\begin{equation}
R^2L_s(\gamma)^2\geqslant \mu_s(A_+)\mu_s(A_-),
\end{equation}
where $L_s(\gamma)$ is the length of $\gamma$. Since
$\mu_s(\Ss_R)=4\pi R^2$ we may assume that $\mu_s(A_+)\geqslant
2\pi R^2$ (otherwise consider $A_-$ instead), thus

\begin{equation}\label{eqIsopMain}
\mu_s(A_-)\leqslant \frac{L_s(\gamma)^2}{2\pi}.
\end{equation}
Notice that this inequality does not depend on the sphere radius.

Let $K$ be a $2$-dimensional simplicial sphere and
$R,c_1,c_2,c_3,c_4$ be positive real numbers. Suppose there exists
a bijective piecewise smooth map $f\colon|K|\to \Ss_R$ such that

\begin{gather}
c_1 L(\gamma)\leqslant L_s(f(\gamma))\leqslant c_2L(\gamma),
\label{eqLipschLength}\\
c_3\mu(\Omega)\leqslant \mu_s(f(\Omega))\leqslant
c_4\mu(\Omega),\label{eqLipschArea}
\end{gather}
for each piecewise smooth curve $\gamma\subset|K|$ and measurable
set $\Omega \subset |K|$. Numbers $c_1,c_2,c_3,c_4$ will be called
Lipschitz constants of the map $f$.

\begin{lemma}\label{lemSmallPartOfCycleK}
Suppose there exists a mapping $f$ of 2-sphere $K$ into a round
sphere $\Ss_R$ with Lipschitz constants $c_1,c_2,c_3,c_4$. Let $C$
be a cycle of $K$ dividing it into two closed regions $K_+$ and
$K_-$. If $C$ is contains at most $N$ vertices, then either $K_+$
or $K_-$ contains at most $\frac{4N^2c_2^2}{\sqrt{3}\pi c_3}$
vertices.
\end{lemma}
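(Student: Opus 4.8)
The plan is to chain together the two estimates already prepared in Construction~\ref{conMetMeasOnK} and inequality~\eqref{eqIsopMain}, using the curve $f(|C|)$ on the round sphere as an intermediary. First I would apply~\eqref{eqDiscrLength} to get $L(|C|)=|\ver(C)|\leqslant N$, and then push this cycle forward: by~\eqref{eqLipschLength} the image curve $\gamma_s\eqd f(|C|)\subset\Ss_R$ is piecewise smooth, closed, without self-intersections, and satisfies $L_s(\gamma_s)\leqslant c_2 L(|C|)\leqslant c_2 N$. The curve $\gamma_s$ divides $\Ss_R$ into two regions $A_+,A_-$, and since $f$ is a bijection these are exactly the images $f(|K_+|)$ and $f(|K_-|)$ of the two sides of $C$.

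Next I would invoke the spherical isoperimetric consequence~\eqref{eqIsopMain}: after relabeling so that $\mu_s(A_+)\geqslant 2\pi R^2$, we get
\begin{equation*}
\mu_s(A_-)\leqslant\frac{L_s(\gamma_s)^2}{2\pi}\leqslant\frac{c_2^2N^2}{2\pi}.
\end{equation*}
Say $A_-=f(|K_-|)$ (the other case is symmetric). Now I would run the Lipschitz bound~\eqref{eqLipschArea} backwards on $\Omega=|K_-|$: from $c_3\mu(|K_-|)\leqslant\mu_s(f(|K_-|))=\mu_s(A_-)$ we obtain $\mu(|K_-|)\leqslant\frac{1}{c_3}\cdot\frac{c_2^2N^2}{2\pi}=\frac{c_2^2N^2}{2\pi c_3}$. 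Finally, feeding this into the combinatorial area estimate~\eqref{eqDiscrArea}, namely $\mathcal{V}_-\leqslant\frac{8}{\sqrt3}\mu(|K_-|)$, yields
\begin{equation*}
|\ver(K_-)|\leqslant\frac{8}{\sqrt3}\cdot\frac{c_2^2N^2}{2\pi c_3}=\frac{4N^2c_2^2}{\sqrt3\,\pi c_3},
\end{equation*}
which is exactly the claimed bound.

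There is essentially no hard step here: the argument is a bookkeeping composition of four inequalities that were each established or recalled in the preceding paragraphs. The only points requiring a word of care are (a) checking that $f(|C|)$ genuinely has no self-intersections, so that~\eqref{eqIsopMain} applies --- this follows because $C$ is an embedded cycle (a simplicial subcomplex homeomorphic to a circle) and $f$ is injective; (b) the relabeling convention, so that the side whose image is the \emph{small} region $A_-$ is the one we call $K_-$, which is harmless since $K_+$ and $K_-$ enter the statement symmetrically (``either $K_+$ or $K_-$''); and (c) noting that the measure of $|K_\pm|$ as used in~\eqref{eqDiscrArea} matches $\mu(\Omega)$ with $\Omega=|K_\pm|$, so the two area estimates can be concatenated. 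None of these presents a real obstacle, so the whole proof is short.
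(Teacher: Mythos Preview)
Your proof is correct and follows exactly the same route as the paper: it chains the four prepared inequalities \eqref{eqDiscrLength}, \eqref{eqLipschLength}, \eqref{eqIsopMain}, \eqref{eqLipschArea}, and \eqref{eqDiscrArea} in the same order, merely spelling out the intermediate quantities and the relabeling convention that the paper compresses into a single displayed chain.
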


\begin{proof}
Among two regions $f(|K_-|),f(|K_+|)\subset \Ss_R$ let $f(|K_-|)$
be the one with the smaller area. Combining \eqref{eqDiscrLength},
\eqref{eqDiscrArea}, \eqref{eqIsopMain}, \eqref{eqLipschLength},
and \eqref{eqLipschArea}, we get
\begin{equation}
\mathcal{V}_-\leqslant \frac{8}{\sqrt{3}}\mu(|K_-|)\leqslant
\frac{8\mu_s(f(|K_-|))}{\sqrt{3}c_3}\leqslant
\frac{8L_s(f(|C|))^2}{2\sqrt{3}\pi c_3}\leqslant \frac{4
N^2c_2^2}{\sqrt{3} \pi c_3},
\end{equation}
which was to be proved.
\end{proof}

Suppose $\ft(K)\leqslant N$. Then by definition there exists a
slicing $K=\bighash_{\Gamma}S_v$, encoded by a tree $\Gamma$, such
that each region $R_v$ has at most $N$ vertices (see
construction~\ref{conSlicingRegions}). Let us show that the degree
of each node $v$ of $\Gamma$ is bounded from above.

\begin{lemma}\label{lemDegreeNodeBound}
If $\Theta$ is a slicing $K=\bighash_{\Gamma}S_v$ and
$\wid(\Theta)\leqslant N$, then $\deg v\leqslant 2(N-2)$ for any
node $v$ of $\Gamma$.
\end{lemma}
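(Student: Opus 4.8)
The plan is to bound the degree of a node $v$ in terms of $N$ by exploiting the fact that the cycles $C_e$ emanating from $R_v$ are mutually ordered (in the sense of construction~\ref{conSlicingRegions}), and counting vertices. First I would fix a node $v$ of degree $d=\deg v$ with incident edges $e_1,\ldots,e_d$, and consider the cycles $C_{e_1},\ldots,C_{e_d}\subset S_v$. Since these cycles are mutually ordered, each $C_{e_j}$ lies on one side of every other $C_{e_k}$, so they are nested in a tree-like fashion inside the $2$-sphere $S_v$, with the region $R_v$ being the ``middle'' piece between them. The key observation is that each $C_{e_j}$ is a $1$-dimensional cycle, hence a cycle graph, hence has at least $3$ vertices; but more importantly, consecutive cycles (in the nesting order around $R_v$) are disjoint as open stars were removed, yet their vertex sets feed into $\ver(R_v)$.

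The core combinatorial step: the region $R_v = S_v \bsm \bigsqcup_j \staro_{S_v} i_{v,e_j}$ is a triangulated surface (a sphere with $d$ open discs removed, so a sphere-with-$d$-holes), and its boundary is the disjoint union $\bigsqcup_j C_{e_j}$. A triangulated sphere-with-$d$-holes that is nondegenerate must have at least $d$ ``boundary components'' each contributing vertices, and a simple Euler characteristic count gives a lower bound on $|\ver(R_v)|$ in terms of $d$. Concretely, I would compute: if $R_v$ has $\mathcal V$ vertices, $\mathcal E$ edges, $\mathcal T$ triangles, then $\mathcal V - \mathcal E + \mathcal T = \chi(R_v) = 2 - d$ (sphere with $d$ holes), and $2\mathcal E \geqslant 3\mathcal T + (\text{boundary edges})$, combined with the fact that each of the $d$ boundary cycles has at least $3$ edges. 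Actually the cleanest route is: each cycle $C_{e_j}$, being the link of the vertex $i_{v,e_j}$ in the $2$-sphere $S_v$, is a cycle of length equal to $\deg_{S_v} i_{v,e_j} \geqslant 3$; removing the open star of $i_{v,e_j}$ removes that one interior vertex but the $C_{e_j}$-vertices remain in $R_v$. By condition (2) of construction~\ref{conSlicingRegions}, the vertices $i_{v,e_j}$ are pairwise non-adjacent, so no vertex of $C_{e_j}$ is equal to any $i_{v,e_k}$; hence $\ver(R_v) \supseteq \bigsqcup_j \ver(C_{e_j})$ minus overlaps. The hard part is controlling the overlaps: different cycles $C_{e_j}$ may share vertices (the cycles ``may have common points'' as noted), so I cannot just sum $3d$.

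To handle the overlaps I would use the mutual ordering: arrange $C_{e_1},\ldots,C_{e_d}$ in their nesting order as seen from $R_v$; then any vertex of $S_v$ lies in at most two of the cycles $C_{e_j}$ (if it were in three mutually ordered cycles, two of them would be forced to coincide near that vertex in a way incompatible with the non-adjacency of the $i_{v,e_j}$). Granting that each vertex is in at most two cycles, $|\ver(R_v)| \geqslant |\bigcup_j \ver(C_{e_j})| \geqslant \frac{1}{2}\sum_j |\ver(C_{e_j})| \geqslant \frac{3d}{2}$. This would give $\deg v \leqslant \frac{2N}{3}$, which is stronger than claimed — so presumably a more careful (and more honest) count using only that \emph{consecutive} cycles in the ordering can coincide but non-consecutive ones are genuinely separated yields the bound $\deg v \leqslant 2(N-2)$ with room to spare. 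I would therefore argue: delete the union of all $C_{e_j}$ from $|S_v|$; the complement has $d+1$ connected components, one of which is the open region $\relint|R_v|$ and the other $d$ are the removed open stars; since $|R_v|$ is a sphere with $d$ holes it is not empty and its triangulation has at least one vertex beyond the boundary unless it is a very thin annular strip — in the thinnest case, $R_v$ is an annulus between two cycles of length $3$ glued edge-to-edge, needing $2\cdot 3 = 6$ vertices for $d=2$. The cleanest rigorous path, which I expect the authors take, is the Euler-characteristic inequality: for a triangulated sphere-with-$d$-holes $R_v$ with all boundary cycles of length $\geqslant 3$, one gets $|\ver(R_v)| \geqslant d + 2$... no: one gets the needed linear bound in $d$, from which $\deg v = d \leqslant 2(\wid(\Theta)-2) \leqslant 2(N-2)$ follows. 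The main obstacle is pinning down exactly how the mutual ordering of the cycles forces the vertex count to grow linearly in $d$ despite possible coincidences — i.e., making precise the intuitive picture in Fig.~\ref{pictConSumGraph} that a region bounded by $d$ nested cycles cannot be too small.
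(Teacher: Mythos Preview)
Your intermediate claim that any vertex of $R_v$ lies in at most two of the cycles $C_{e_j}$ is false, and with it the bound $\deg v\leqslant 2N/3$. A vertex $w$ belongs to $C_{e_j}=\link_{S_v}i_{v,e_j}$ exactly when $w$ is adjacent to $i_{v,e_j}$; nothing prevents $w$ from being adjacent to many pairwise non-adjacent vertices. For instance, in a simplicial $2$-sphere containing a wheel with hub $w$ and rim $v_1,\dots,v_{2k}$, the vertices $v_1,v_3,\dots,v_{2k-1}$ are $k$ pairwise non-adjacent neighbours of $w$, so if these are among the $i_{v,e_j}$ then $w$ lies on $k$ of the cycles. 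The ``mutual ordering'' of the cycles does not rule this out: the cycles are the boundaries of disjoint open discs in $|S_v|$, and any number of such boundaries may pass through a single point.

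The Euler-characteristic route you gesture at is the right one, and it is exactly what the paper does, but you set it up awkwardly by treating $R_v$ as a surface with boundary (so $\chi=2-d$) and then trying to separate interior and boundary edges. The clean formulation is to regard the $1$-skeleton of $R_v$ as a connected plane graph embedded in the sphere $|S_v|$. Its faces are the triangles of $R_v$ together with the $d$ discs left by the removed open stars; every face has at least three sides. Now the standard count applies: with $\mathcal V,\mathcal E,\mathcal R$ the numbers of vertices, edges, and faces, one has $\mathcal V-\mathcal E+\mathcal R=2$ and $2\mathcal E\geqslant 3\mathcal R$, whence $\mathcal R\leqslant 2(\mathcal V-2)$. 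Since each of the $d$ removed stars contributes a face, $d\leqslant\mathcal R\leqslant 2(\mathcal V-2)\leqslant 2(N-2)$. No analysis of how the cycles overlap is needed.
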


\begin{proof}
Denote $\deg v$ by $d$. By construction, the region $R_v$ is
obtained from a sphere $S_v$ by removing $d$ open stars which
correspond to the edges of $\Gamma$ emanating from $v$. The
complex $R_v$ itself can be considered as a plane graph. Denote
the numbers of its vertices, edges and faces by
$\mathcal{V},\mathcal{E},\mathcal{R}$ respectively. By the
definition of the width, we have $\mathcal{V}\leqslant N$. We also
have $\mathcal{V}-\mathcal{E}+\mathcal{R}=2$, and
$2\mathcal{E}\geqslant 3\mathcal{R}$ (each region has at least $3$
edges). Thus, $\mathcal{V}\geqslant 2+\frac12\mathcal{R}$. Notice
that each removed open star represents a face of graph $R_v$,
therefore, $d\leqslant \mathcal{R}\leqslant
2(\mathcal{V}-2)\leqslant 2(N-2)$.
\end{proof}

\begin{lemma}\label{lemManyVertImplyFat}
Let $K$ be a $2$-dimensional simplicial sphere endowed with the
map $f$ to the round sphere, satisfying Lipschitz bounds
\eqref{eqLipschLength} and \eqref{eqLipschArea}. For a natural
number $N$ set $A=\frac{4N^2c_2^2}{\sqrt{3}\pi c_3}$ and $B=
2(N-2)$. If $|\ver(K)|>\max(AB+N,2A)$, then $\ft(K)>N$.
\end{lemma}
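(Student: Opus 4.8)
The plan is to argue by contradiction: assume $\ft(K)\leqslant N$ and deduce $|\ver(K)|\leqslant AB+N$, contradicting the hypothesis. By the definition of fatness \eqref{eqFatnessDefin} there is a slicing $\Theta\colon K=\bighash_{\Gamma}S_v$ with $\wid(\Theta)\leqslant N$, so $|\ver(R_v)|\leqslant N$ for every node $v$ and, by Lemma~\ref{lemDegreeNodeBound}, $\deg v\leqslant B=2(N-2)$. First I would check that each border cycle $C_e$ lies in the region $R_v$ adjacent to it: a face $J\smallsetminus i_{v,e}$ of $\link_{S_v}i_{v,e}$ can lie in a removed open star $\staro_{S_v}i_{v,e'}$ only if $J$ contains both $i_{v,e}$ and $i_{v,e'}$, which is forbidden by condition~(2) of Construction~\ref{conSlicingRegions} (and it cannot lie in $\staro_{S_v}i_{v,e}$ since a complementary face omits the deleted vertex). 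Hence $|\ver(C_e)|\leqslant|\ver(R_v)|\leqslant N$, so Lemma~\ref{lemSmallPartOfCycleK} applies to every $C_e$: deleting $C_e$ splits $K$ into two closed discs, one of which has at most $A=\frac{4N^2c_2^2}{\sqrt{3}\pi c_3}$ vertices.

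Next I would orient the tree $\Gamma$. For an edge $e$, the two discs obtained by deleting $C_e$ are exactly the unions of the regions indexed by the two components of $\Gamma\smallsetminus e$; their vertex sets have union $\ver(K)$ and intersection $\ver(C_e)$. Since one of them has at most $A$ vertices and $|\ver(K)|>2A$, the other necessarily has strictly more than $A$ vertices (both cannot be $\leqslant A$). So the ``small side'' of $e$ is unambiguously determined; orient $e$ to point away from its small side. A finite directed tree always has a node $v_0$ all of whose incident edges point into it: starting from any node and following outgoing edges produces a walk that, by acyclicity of $\Gamma$, cannot repeat a node and therefore terminates at such a $v_0$. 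For this $v_0$ the small side of each incident edge $e$ is the branch $K^e$ hanging off $v_0$ (the union of the regions beyond $e$), so $|\ver(K^e)|\leqslant A$.

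Finally I would count vertices at $v_0$. Every vertex of $K$ lies in some region $R_w$; if $w=v_0$ it is counted in $R_{v_0}$, and otherwise $w$ lies beyond a unique edge $e$ incident to $v_0$, so the vertex lies in the branch $K^e$ — and if it happens to lie on $C_e$ it also lies in $R_{v_0}$, since $C_e\subset R_{v_0}$ by the first step. Therefore
\begin{equation*}
|\ver(K)|\ \leqslant\ |\ver(R_{v_0})|+\sum_{e\ni v_0}\bigl(|\ver(K^e)|-|\ver(C_e)|\bigr)\ \leqslant\ N+(\deg v_0)\cdot A\ \leqslant\ N+BA,
\end{equation*}
which contradicts $|\ver(K)|>\max(AB+N,2A)$ and finishes the proof.

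The analytic heart — the Euler-characteristic and isoperimetric estimates — is already isolated in Lemmas~\ref{lemSmallPartOfCycleK} and~\ref{lemDegreeNodeBound}, so no real calculation remains. The step needing care is the combinatorial bookkeeping of the border cycles: verifying $C_e\subset R_v$, and making sure that the degenerate situations allowed in Construction~\ref{conSlicingRegions} — distinct cycles $C_e$ sharing points, or even coinciding so that a region degenerates to a cycle — do not invalidate the covering used in the last display (they only cause harmless over-counting in an upper bound). The ``sink of a directed tree'' observation is elementary and is exactly where the assumption that $\Gamma$ is a tree enters.
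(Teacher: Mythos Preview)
Your argument is correct and follows essentially the same route as the paper: assume $\ft(K)\leqslant N$, bound each $|\ver(C_e)|\leqslant N$, apply Lemma~\ref{lemSmallPartOfCycleK} and the hypothesis $|\ver(K)|>2A$ to orient $\Gamma$, locate a distinguished node of the tree, and bound $|\ver(K)|\leqslant N+AB$ using Lemma~\ref{lemDegreeNodeBound}. The only cosmetic difference is that the paper orients edges from the large side to the small side and takes a \emph{source}, whereas you orient from small to large and take a \emph{sink}; this is the same node, and your write-up is simply more explicit about why $C_e\subset R_v$ and about the over-counting in the final estimate.
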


\begin{proof}
Assume the contrary: $\ft(K)\leqslant N$. Then there is a slicing
$K=\bighash_{\Gamma}S_v$ in which every region $R_v$ has at most
$N$ vertices. Consequently, any cycle $C_e, e\in E$ has at most
$N$ vertices. By Lemma~\ref{lemSmallPartOfCycleK}, the cycle $C_e$
divides $K$ into two parts, one of which has $\leqslant A$
vertices. Since $|\ver(K)|>2A$, the other part has $>A$ vertices.
Assign a direction to each edge $e$ of $\Gamma$ in such a way that
$e$ points from the larger component of $K\smallsetminus C_e$ to
the smaller, where the ``size'' means the number of vertices.

$\Gamma$ is a tree, therefore there exists a source, i.e. a node
$u$ from which all adjacent edges emanate. Speaking informally,
this node represents a ``big sized bubble'', meaning that the part
of a sphere, lying across each border has a small size. Let $d$
denote the degree of the chosen node $u$. Denote by
$\Gamma_1,\ldots,\Gamma_d$ the connected components of the graph
$\Gamma\smallsetminus u$. By Lemma~\ref{lemDegreeNodeBound} we
have $d\leqslant B$. By the construction of the directions of
edges, $\left|\ver(\bigsqcup_{\Gamma_i}R_v)\right|\leqslant A$ for
each $\Gamma_i$. Thus $|\ver(K)|\leqslant|\ver(R_u)|+\sum_{i=1}^d
\left|\ver(\bigsqcup_{\Gamma_i}R_v)\right| \leqslant N+AB$ --- the
contradiction.
\end{proof}

\begin{lemma}\label{lemExistBigAndLips}
For any $N>0$ there exists a $2$-dimensional simplicial sphere $K$
such that:
\begin{enumerate}
\item There exists a piecewise smooth map $f\colon|K|\to\Ss_R$ satisfying Lipschitz bounds
\eqref{eqLipschLength} and \eqref{eqLipschArea} for some constants
$c_1,c_2,c_3,c_4,R>0$
\item $|\ver(K)|>\max(AB+N,2A)$, where $A$ and $B$ are defined in
Lemma~\ref{lemManyVertImplyFat}.
\end{enumerate}
\end{lemma}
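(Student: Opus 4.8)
The plan is to produce $K$ as a very fine, combinatorially uniform refinement of the boundary of one fixed convex polytope with triangular faces, and to take for $f$ the radial projection of this refinement onto a concentric round sphere. The whole point is a scaling observation: dilating a convex polytope by a factor $m$ multiplies all intrinsic lengths on its boundary by $m$ and all areas by $m^2$, and it does \emph{exactly} the same to the circumscribed sphere; hence the radial projection has Lipschitz constants (length- and area-distortion bounds) that are \emph{independent of $m$}, while the number of vertices of the refinement grows quadratically in $m$. Feeding this into Lemma~\ref{lemManyVertImplyFat} then forces $\ft(K)>N$ once $m$ is large enough, which is how Proposition~\ref{propExistFatPoly} follows.

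In detail, let $I\subset\Ro^3$ be the regular icosahedron with unit edge lengths, centered at the origin, and let $R_0>0$ be the radius of its circumscribed sphere. For $m\in\Noo$ let $K_m$ be the simplicial complex obtained from the boundary complex of $I$ by the standard $m$-fold linear subdivision of each triangular face $\{u,v,w\}$ into $m^2$ small triangles, with vertex set $\{\tfrac1m(au+bv+cw)\mid a,b,c\in\Zo_{\geqslant 0},\ a+b+c=m\}$. These subdivisions agree along shared edges, so $K_m$ is a genuine simplicial $2$-sphere (every vertex has degree $5$ or $6$), and a routine count gives $|\ver(K_m)|=10m^2+2$. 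Endow $|K_m|$ with the metric $g$ and measure $\mu$ of Construction~\ref{conMetMeasOnK}: each small triangle becomes a unit equilateral Euclidean triangle. Then each face of $I$ becomes isometrically an equilateral triangle of side $m$, glued to its neighbours exactly as in $\partial(mI)$, so $(|K_m|,g)$ is isometric to the boundary of the dilated polytope $mI$ with its intrinsic flat metric.

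Now define $f_m\colon|K_m|\cong\partial(mI)\to\Ss_{mR_0}$ to be the radial projection $x\mapsto mR_0\,x/|x|$. Since $0$ is interior to $I$ this is a bijection; it is smooth in the relative interior of each face and continuous across edges, hence piecewise smooth. For the Lipschitz bounds, first treat $m=1$: the radial projection $\pi_0\colon\partial I\to\Ss_{R_0}$ restricts on each of the $20$ closed triangular faces to a map which is smooth up to the boundary (the affine hull of a face misses the origin), so on each face its differential and its inverse differential have bounded operator norms; taking the extreme values over the finitely many faces gives positive constants $c_1,c_2,c_3,c_4$ witnessing \eqref{eqLipschLength}--\eqref{eqLipschArea} for $\pi_0$. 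For general $m$, write $f_m=\sigma_m\circ\pi_0\circ s_m^{-1}$, where $s_m\colon\partial I\to\partial(mI)$ and $\sigma_m\colon\Ss_{R_0}\to\Ss_{mR_0}$ are the dilations by $m$; since $s_m$ scales intrinsic length by $m$ and intrinsic area by $m^2$, and $\sigma_m$ does the same on the round spheres, the distortion ratios in \eqref{eqLipschLength} and \eqref{eqLipschArea} are unchanged, so the \emph{same} $c_1,c_2,c_3,c_4$ work for every $f_m$. This proves (1) for $K=K_m$, with constants not depending on $m$.

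Finally (2): with $c_2,c_3,N$ now fixed, the numbers $A=\frac{4N^2c_2^2}{\sqrt3\pi c_3}$ and $B=2(N-2)$ of Lemma~\ref{lemManyVertImplyFat} are fixed, so $\max(AB+N,2A)$ is a fixed real number, whereas $|\ver(K_m)|=10m^2+2\to\infty$; choose $m$ so large that $10m^2+2>\max(AB+N,2A)$ and set $K=K_m$. The one genuinely nonroutine point — and the step I expect to be the main obstacle — is exactly this scale-invariance of the Lipschitz data: one must extract the constants from the \emph{unit} polytope and transport them verbatim to all dilates, since a direct estimate for $\partial(mI)\to\Ss_{mR_0}$ performed in fixed ambient coordinates would give $m$-dependent bounds and wreck the argument. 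A lesser technical point is that $f_m$ is only piecewise smooth (across edges and cone points), but this is harmless: it is continuous with uniformly bounded differential off a measure-zero set, so the global two-sided length and area bounds follow by integrating over the finitely many pieces of any given curve or region.
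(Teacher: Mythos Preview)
Your proof is correct and follows essentially the same approach as the paper: start from a fixed convex polytope with triangular faces (the paper uses a regular tetrahedron rather than an icosahedron), radially project to the circumsphere to obtain fixed Lipschitz constants, and then pass to the $m$-fold uniform subdivision, using the scale-invariance of the radial projection to keep the same constants while the vertex count grows quadratically. The paper even remarks that the choice of initial polytope is immaterial, so your use of the icosahedron is a harmless variation.
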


\begin{proof}
Start with the boundary of a regular tetrahedron with edge length
$1$: $L=\partial \Delta^3$. The projection from the center of $L$
to the circumsphere $f\colon L\to \Ss_R$ is obviously Lipschitz
for some constants $c_1,c_2,c_3,c_4>0$. Now subdivide each
triangle of $|L|$ into $q^2$ smaller regular triangles as shown on
Fig.~\ref{fig:subdiv}.

\begin{figure}[h]
\begin{center}
    \begin{tikzpicture}[scale=0.5,fill=black!35]
    \tikzstyle{myarrows}=[line width=1mm,draw=black!35,-triangle 90,postaction={draw, line width=3mm, shorten >=2mm, -}]
    \def\hh{0.866}
    \foreach \i in {0,1,...,4} {
        \pgfmathtruncatemacro{\en}{4-\i}
        \foreach \j in {0,...,\en} {
            \draw (0.5*\i + \j,\hh*\i) -- (0.5*\i + \j + 1,\hh*\i) -- (0.5*\i + \j + 0.5,\hh*\i+\hh) -- cycle;
        }
    }
    \draw (-7,0) -- (-2,0) -- (-4.5,5*\hh) -- cycle;
    \draw [myarrows](-2,2.5*\hh)--(0,2.5*\hh);
    \end{tikzpicture}
\end{center}
\caption{Subdivision of a regular triangle.} \label{fig:subdiv}
\end{figure}
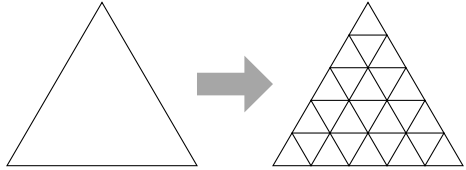

This results in a simplicial complex $L_{(q)}$. As a space with
metric and measure $|L_{(q)}|$ is homothetic to $|L|$ with a
linear scaling factor $q$ (recall that the metric on simplicial
complexes is introduced in such way that each edge has length
$1$). Thus there exists a map $f_{(q)}\colon |L_{(q)}|\to
\Ss_{qR}$ with the same Lipschitz constants as $f$. The number of
vertices $|\ver(L_{(q)})|$ can be made arbitrarily large.

%Notice that the convexity condition is not restrictive in
%dimension $2$. Every simplicial sphere $L_{(q)}$ is
%combinatorially equivalent to the boundary of some simplicial
%$3$-polytope by Steinitz's theorem (e.g. \cite{zieg}). Thus there
%exist simple polytopes $P_q$ such that $L_{(q)}=K_{P_q}$.
\end{proof}

Lemmas \ref{lemExistBigAndLips} and \ref{lemManyVertImplyFat}
conclude the proof of Proposition~\ref{propExistFatPoly}.
\end{proof}

\begin{rem}
Actually, in the proof of Lemma~\ref{lemExistBigAndLips} we could
have started from any simplicial sphere $L$, take any piecewise
smooth map $f\colon|L|\to \Ss_R$, find Lipschitz constants
$c_2,c_3>0$ (they exist by the standard calculus arguments), and
then apply the same subdivision procedure. We used the boundary of
a regular simplex, because in this case Lipschitz map is
constructed easily and allows for an explicit computation.

We give a concrete example of a quasitoric manifold which is not
toric origami, by performing this computation. The calculations
themselves are elementary thus omitted. It is sufficient to
construct a simplicial sphere for $N=8$. For a projection map from
the boundary of a regular tetrahedron to the circumscribed sphere
we have Lipschitz constants $c_2=3$, $c_3=\frac{1}{3}$. Thus
$\max(AB+N,2A)\approx 15251.14$. Subdivide each triangle in the
boundary of a regular tetrahedron in $q^2$ small triangles where
$q\geqslant 88$. This gives a simplicial sphere $K$ with at least
$15490$ vertices and the same Lipschitz constants as $\partial
\Delta^3$. Thus $\ft(K)>8$. Now take the dual simple polytope $P$
of $K$, consider any proper coloring of facets in four colors and
assign a characteristic function $\Lambda$, as described in
Lemma~\ref{lemFourColor}. This gives a characteristic pair
$(P,\Lambda)$, whose corresponding quasitoric manifold is not
toric origami.

Of course, all our estimations are very rough, and, probably,
there are better ways to construct fat spheres. For sure, there
exist $2$-spheres of fatness $9$ with less than $15490$ vertices.
\end{rem}

\begin{rem}
Note that in dimension $3$ and higher there is no simplicial
subdivision of a regular simplex into smaller regular simplices.
This is one of two places in the proof, where the dimension
restriction is crucial. The second place is the Four color theorem
in Lemma~\ref{lemFourColor}.
\end{rem}

Proposition \ref{propThmReform} proves Theorem
\ref{thmQtorNotOrigami} for $n=3$. Now we need to make the
remaining cases $n>3$.

\begin{prop}\label{propExistHighDim}
There exist quasitoric manifolds of any dimension $2n$, $n>3$,
which are not toric origami.
\end{prop}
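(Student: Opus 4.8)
The plan is to bootstrap the $n=3$ case to all higher dimensions by a product-type construction. Suppose $M^6 = M_{(P,\Lambda)}$ is a $6$-dimensional quasitoric manifold which is not toric origami, coming from a characteristic pair $(P,\Lambda)$ with $P$ a simple $3$-polytope whose dual sphere $K_P$ has large fatness, as produced in the proof of Proposition~\ref{propThmReform}. For $n > 3$ consider the product $P' = P \times \Delta^{n-3}$, which is again a simple polytope of dimension $n$, and extend $\Lambda$ to a characteristic function $\Lambda'$ on $\F(P')$ by sending the new facets (those coming from the simplex factor) to a standard basis of the complementary $\Zo^{n-3}$ summand. This yields a quasitoric manifold $M^{2n} = M_{(P',\Lambda')}$, which is equivariantly homeomorphic to $M^6 \times \CP^{n-3}$ with the product torus action (equivalently, an iterated $\CP^1$-bundle on top of $M^6$, but here the trivial one suffices). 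The claim is that $M^{2n}$ is not toric origami.

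The key step is to show that if $M^{2n}$ were toric origami, then $M^6$ would be too, contradicting Proposition~\ref{propThmReform}. First I would pass to the dual combinatorics: $K_{P'} = \partial (P')^* = K_P * \partial(\Delta^{n-3})^*$ is the join of $K_P$ with the boundary of an $(n-4)$-simplex, i.e., the $(n-3)$-fold suspension... more precisely, $K_{P\times\Delta^{n-3}}$ is the join $K_P * S^{n-4}_{\text{simplicial}}$ where the second factor is a simplicial $(n-4)$-sphere on $n-2$ vertices. If $(K_{P'},\Lambda')$ decomposes as a connected sum along a tree of Delzant duals $\bighash_\Gamma (K_{\PsiV(v)},\nu_v)$, I would analyze how the connected sum interacts with the join structure. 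The essential point is that each fold cycle $C_e$ is a simplicial $(n-2)$-sphere separating $K_{P'}$; because the $n-2$ ``simplex'' vertices each have the property that they are adjacent to every vertex of $K_P$ (join structure), any separating cycle must either contain all of them or be confined to a single join-fiber, and the latter is impossible for a full $(n-2)$-cycle unless $n=3$. Hence, after restricting to the common link of these $n-3$ simplex vertices, the slicing of $K_{P'}$ descends to a slicing of $K_P$ with the same tree, and the induced characteristic functions are compatible with the restriction of $\nu_v$ to the corresponding link — which is exactly the data of a Delzant polytope (a face of $\PsiV(v)$, cut by the hyperplanes carrying the simplex-facet normals, remains Delzant). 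This produces a representation of $(K_P,\Lambda)$ as a connected sum along a tree of Delzant duals, the desired contradiction.

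Alternatively, and perhaps more cleanly, I would argue directly via Lemma~\ref{lemBigFatSmalCol}: it suffices to check that $\ft(K_{P'}) > 2r'$ where $r' = |\Lambda'(\ver(K_{P'}))|$. We have $r' \leqslant r + (n-3)$ where $r \leqslant 4$ is the number of colors for $K_P$, so $r'$ is bounded by a constant depending only on $n$. On the other hand, any slicing of $K_{P'} = K_P * L$ (with $L$ the auxiliary simplicial sphere on $n-2$ vertices) restricts to a slicing of $K_P$ with regions no smaller than before — intuitively, removing the open star of a vertex $i_{v,e}$ of $K_{P'}$ removes at most $|\ver(L)|$ more vertices from the $K_P$-part than the corresponding operation on $K_P$ alone, or one can simply observe $\ft(K_P * L) \geqslant \ft(K_P)$ since the $L$-vertices are present in every region touching the join. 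Thus $\ft(K_{P'}) \geqslant \ft(K_P) > N$, which we may take as large as we like independent of $n$; choosing the original $K_P$ with $\ft(K_P) > 2(r+n-3)$ gives a characteristic pair $(P',\Lambda')$ satisfying the hypotheses of Lemma~\ref{lemBigFatSmalCol}, hence $M^{2n}_{(P',\Lambda')}$ is not toric origami.

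The main obstacle I anticipate is making rigorous the claim that slicings (equivalently, families of mutually ordered separating $(n-2)$-cycles) of the join $K_P * L$ restrict well to slicings of $K_P$ — one must rule out exotic separating cycles that ``weave through'' the join factor and do not come from separating cycles of $K_P$. The cleanest route around this is the fatness estimate $\ft(K_P * L) \geqslant \ft(K_P)$, which should follow from a direct vertex-counting argument: in any slicing of $K_P * L$, every region that meets the interior of $K_P$ contains all of $\ver(L)$ (since each vertex of $L$ is joined to all of $K_P$ and cannot lie in any fold cycle $C_e$, which is an $(n-2)$-sphere and would be ``filled in'' on one side by that vertex), and projecting such a region's $K_P$-vertices gives a region of the induced slicing of $K_P$ of at least the same $K_P$-vertex count. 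I would carry out this estimate carefully and then the Proposition follows immediately from Lemma~\ref{lemBigFatSmalCol} applied in dimension $n$.
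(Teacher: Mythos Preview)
Your overall strategy---take the product of the $6$-dimensional non-example with a symplectic toric factor and argue that if the product were toric origami then the $6$-dimensional factor would be too---matches the paper's. The paper's execution, however, is a one-line application of something already established: by construction~\ref{conOrbitOfOrigami} every characteristic submanifold of a toric origami manifold is itself toric origami, via the induced origami template $\Ot_{[F]}$ on a facet. Taking $M_{(P,\Lambda)}\times S^2$ (so $P'=P\times\mathbb{I}$), the factor $M_{(P,\Lambda)}$ is the characteristic submanifold over either end-facet $P\times\{\mathrm{pt}\}$; hence if the product were toric origami, $M_{(P,\Lambda)}$ would be as well, a contradiction. Iterating gives all $n>3$. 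No analysis of slicings of joins is needed. Your first combinatorial alternative is really a disguised version of this argument (restricting an origami template to a face is precisely the passage to $\Ot_{[F]}$), but you do not identify it as such and leave the ``descent'' step as an acknowledged gap.

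Your second alternative, via the fatness inequality $\ft(K_P*L)\geqslant\ft(K_P)$, contains a concrete error. The claim that a vertex of $L$ ``cannot lie in any fold cycle $C_e$'' is false: for $n=4$ and $L=\{a,b\}$, given any $1$-cycle $\gamma\subset K_P$ bounding a disk $D\subset K_P$, the subcomplex $(a*\gamma)\cup D$ is a separating $2$-sphere in $K_P*L=\Sigma K_P$ that contains $a$. The derived claim that ``every region that meets the interior of $K_P$ contains all of $\ver(L)$'' is likewise false: the equatorial slice of $\Sigma K_P$ along $K_P$ itself yields two regions, one containing $a$ and not $b$, the other containing $b$ and not $a$. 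The fatness inequality may well be true, but your argument does not prove it---and in any case it is unnecessary once you invoke the characteristic-submanifold observation above.
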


\begin{proof}
Let $M_{(P,\Lambda)}$ be any quasitoric manifold, which is not
toric origami. Take the product of $M_{(P,\Lambda)}$ with $S^2$
(the circle $T^1$ acts on $S^2$ by axial rotations). On the level
of orbit spaces, this corresponds to multiplying $P$ with a closed
interval $\mathbb{I}\subset \Ro$. We claim that quasitoric
manifold $M_{(P,\Lambda)}\times S^2$ is not toric origami. If
$M_{(P,\Lambda)}\times S^2$ were a toric origami manifold, then
all its characteristic submanifolds should be toric origami as
well (see construction~\ref{conOrbitOfOrigami}). But
$M_{(P,\Lambda)}$ is one of them. This gives a contradiction. Thus
taking products with $S^2$ produces examples for all $n>3$.
\end{proof}

\begin{rem}
Sphere $S^2$ is the simplest example of a quasitoric manifold. In
the proof of Proposition~\ref{propExistHighDim} we could have used
any other quasitoric manifold instead of $S^2$. If
$M_{(P,\Lambda)}$ and $M_{(P',\Lambda')}$ are quasitoric manifolds
and one of them is not toric origami, then the quasitoric manifold
$M_{(P,\Lambda)}\times M_{(P',\Lambda')} = M_{(P\times
P',\Lambda\oplus\Lambda')}$ is not toric origami as well.
\end{rem}

\begin{rem}
On the other hand, new toric origami manifolds can be produced
from a given one in a similar way as we used for constructing
non-examples. It is easy to observe that if $M$ is a toric origami
manifold and $M'$ is a toric symplectic manifold, then $M\times
M'$ is again a toric origami manifold. We would also like to
mention that projective bundles over toric origami manifolds are
again toric origami manifolds. More precisely, if $M^{2n}$ is
toric origami, and $L_1,\ldots,L_k$ are complex line bundles over
$M$, each having an $S^1$ action on fibers, then the
projectivization
\[
\tilde{M}=P\left(\bigoplus_{j=1}^kL_j\oplus \underline{\Co}\right)
\]
with the induced action of $T^n\times(S^1)^k$ is also a toric
origami manifold.
\end{rem}

%%%%%%%%%%%%%%%%%%%%%%%%%%%%%%%%%%%%%%%%%%%%%%%%%%%%%%%%%%%%%%%%%%
%
%
%
%
%
%
%
%%%%%%%%%%%%%%%%%%%%%%%%%%%%%%%%%%%%%%%%%%%%%%%%%%%%%%%%%%%%%%%%%%

\section{Discussion and open questions}\label{sectConclusion}

\subsection{Asymptotically most of simplicial $2$-spheres are fat}
We already mentioned a relation of our study to the theory of
planar separators in Section~\ref{sectProof}. We also want to
mention another connection to the theory of random infinite planar
maps. This rapidly developing part of probability theory aims,
among other things, to give a firm foundation for some facts in
statistical physics and quantum gravity. The basic idea of this
study is the following~\cite{gal-lect,gal-main}. Fix a number $k$,
a parameter of the whole construction. For a given $n$ consider
all possible (rooted) plane $k$-angulations with $n$ faces. For
$k=3$, these are roughly the same as simplicial spheres. Every
plane graph has a standard metric, turning it into a metric space.
By letting the number of faces tend to infinity, and renormalizing
the diameter of graphs in a correct way, one considers the limits
of converging sequences of graphs. The limits are taken with
respect to the Gromov--Hausdorff metric defined on the set of
isometry classes of metric spaces.

Since there is only a finite number of such graphs with a fixed
number $n$ of faces, we can take a uniform distribution on this
set of graphs. The uniform distributions on the sets of prelimit
metric spaces give rise to a limiting distribution, which is
viewed as a random compact metric space (of course, here we omit a
lot of technicalities, needed to state everything precisely). The
resulting random metric space is called a Brownian map and
considered as a good $2$-dimensional analogue of the Brownian
motion.

A wonderful thing is that a Brownian map does not actually depend
on the parameter $k$, if $k$ is either $3$ or
even~\cite{gal-main}. It is also known that the Brownian map is
almost surely homeomorphic to a $2$-sphere~\cite{gal-homeo}. This
suggests the following

\begin{claim}
For each $N>0$ almost all simplicial $2$-spheres $K$ have
$\ft(K)>N$. More precisely, if $A_n$ denotes the set of all
simplicial $2$-spheres with $\leqslant n$ triangles, and
$B_{n,N}\subset A_n$ the subset of simplicial spheres having
$\ft(K)>N$, then
\[\lim_{n\to\infty}\frac{|B_{n,N}|}{|A_n|}=1.\]
\end{claim}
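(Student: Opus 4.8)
The plan is to turn the heuristic of this section into a real argument: a uniformly random simplicial $2$-sphere, suitably rescaled, looks like the Brownian map, which is topologically a $2$-sphere and hence admits no ``bottleneck'', whereas a sphere of small fatness must have one.

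First I would reduce to a statement at fixed size. Conditioning on the number of triangles, and using that the number of simplicial $2$-spheres with at most a fixed number of triangles is finite while $|A_n|\to\infty$, it suffices to show that $q_{n'}:=\Pr[\ft(K)\le N\mid K\text{ has }n'\text{ triangles}]\to 0$ as $n'\to\infty$ (and we may assume $N\ge 3$). Then I would prove the combinatorial implication: \emph{if} $\ft(K)\le N$ and $K$ has $n'$ triangles, \emph{then} $|K|$ has a separating cycle of at most $N$ vertices both of whose sides carry at least a fixed fraction $\beta=\beta(N)>0$ of the uniform vertex measure. Indeed, fix a slicing $\Theta$ with $\wid(\Theta)\le N$, with tree $\Gamma=(V,E)$ and regions $R_v$. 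Every vertex of $K$ lies in some region, so $N|V|\ge\sum_v|\ver(R_v)|\ge|\ver(K)|=\tfrac{n'}{2}+2$, whence $|V|\ge n'/(2N)$; by Lemma~\ref{lemDegreeNodeBound} every node of $\Gamma$ has degree at most $D:=2(N-2)$. A tree with $m$ nodes and maximal degree $D$ has a centroid vertex all of whose branches have $\le m/2$ nodes, and hence an edge $e$ whose removal splits $\Gamma$ into two subtrees of $\ge (m-1)/D$ nodes each. The cut cycle $C_e$ separates $|K|$ into two closed discs $K_\pm$ with $K_+\cap K_-=C_e$; $C_e$ has $\le N$ vertices, and each $K_\pm$ is the union of at least $(|V|-1)/D$ regions, a number which is $\ge c_N n'$ for a constant $c_N=c_N(N)>0$, each region being a nonempty pure $2$-subcomplex and so containing a triangle. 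Converting these triangle counts to vertex counts gives $\mu(K_\pm)\ge\beta$ once $n'$ is large.

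Second, I would feed this into the scaling limit. Rescale graph distances by $(n')^{-1/4}$; then on $\{\ft(K)\le N\}$, for every fixed $\delta>0$ and all large $n'$, the rescaled $K$ has a \emph{$(\delta,\beta)$-bottleneck}, i.e.\ a splitting $K=K_+\cup K_-$ into closed sets with $\mu(K_\pm)\ge\beta$ and $\mathrm{diam}(K_+\cap K_-)<\delta$. Now use that a uniform simplicial $2$-sphere with $n'$ triangles, rescaled by $(n')^{-1/4}$ and carrying its uniform probability measure, converges in distribution in the Gromov--Hausdorff--Prokhorov topology to a constant multiple of the Brownian map $\mathbf m$ \cite{gal-main}, which is almost surely a geodesic metric measure space, homeomorphic to $S^2$ \cite{gal-homeo}, with non-atomic measure. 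If $q_{n'}\not\to 0$, pick a subsequence along which $q_{n'}\ge\varepsilon_0>0$. Since possessing a $(\delta,\beta)$-bottleneck is a closed condition under Gromov--Hausdorff--Prokhorov convergence of geodesic spaces, it follows that $\Pr[\mathbf m\text{ has a }(\delta,\beta)\text{-bottleneck}]\ge\varepsilon_0$ for every $\delta>0$; intersecting over $\delta=1/k$ and passing to a Hausdorff limit of the pieces yields, with probability $\ge\varepsilon_0$, a decomposition $\mathbf m=A_+\cup A_-$ into closed sets with $\mu(A_\pm)\ge\beta$ and $A_+\cap A_-$ a single point $x^\ast$. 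But then $\mathbf m\setminus\{x^\ast\}=(A_+\setminus\{x^\ast\})\sqcup(A_-\setminus\{x^\ast\})$ is a disjoint union of two nonempty relatively closed sets (nonempty because each has positive measure and the measure has no atoms), so $\mathbf m\setminus\{x^\ast\}$ is disconnected, contradicting $\mathbf m\cong S^2$. Hence $q_{n'}\to 0$, and the Claim follows.

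I expect the only genuinely delicate step to be the closedness of the ``$(\delta,\beta)$-bottleneck'' property under Gromov--Hausdorff--Prokhorov limits. Passing to Hausdorff-convergent subsequences of $K_\pm$ inside a coupling, the limits $A_\pm$ obviously still cover the limit space and still have measure $\ge\beta$ by upper semicontinuity of the measure of closed sets, but one must also control $A_+\cap A_-$, which a priori could be larger than the limit of the cut cycles $C_e$; this is where the geodesic property of the Brownian map enters, since any path from the $A_+$-side to the $A_-$-side must meet $A_+\cap A_-$, and in the approximating maps such a crossing point lies within vanishing distance of $C_e$. Everything else --- the separator estimate, the reduction over the number of triangles, the passage from rooted to isomorphism-class counting (almost all large maps being rigid), and the final connectivity argument for $S^2$ --- is routine. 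One could alternatively try to extract the Claim from quantitative planar-separator results as indicated after Proposition~\ref{propExistFatPoly}, but converting a worst-case $\Omega(\sqrt k)$ separator bound into an ``almost all'' statement appears to require essentially the same input from the theory of random maps, so the route above looks the most economical.
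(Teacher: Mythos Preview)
The paper does not actually prove this Claim --- it sits in the discussion section with only a heuristic paragraph citing \cite[Cor.~5.3]{gal-lect} --- so your proposal is not competing with a proof but attempting to make the paper's own sketch rigorous, and your overall architecture matches that sketch exactly.

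There is one concrete error in the combinatorial step. You assert that each region $R_v$ is ``a nonempty pure $2$-subcomplex and so contain[s] a triangle'', but this is false: if $S_v$ is the bipyramid over a $k$-cycle and one removes the open stars of the two (non-adjacent) apexes, the region left is precisely the equatorial $k$-cycle, which is $1$-dimensional and has no $2$-simplex. (The paper itself remarks, just after Construction~\ref{conSlicingRegions}, that a region may coincide with its bounding cycles.) Hence counting \emph{nodes} in the subtrees of $\Gamma$ does not bound the number of triangles on either side of $C_e$, and your passage ``converting these triangle counts to vertex counts'' has nothing to convert. The repair is easy: take a centroid of $\Gamma$ with respect to the weights $w(v)=\#\{\text{triangles of }R_v\}$; since $\sum_v w(v)=n'$ and $0\le w(v)\le 2N-4$ (a planar region on $\le N$ vertices), the edge from the weighted centroid to its heaviest branch cuts $K$ into discs each carrying at least $n'/(2D)-O_N(1)$ triangles, and Euler's formula then gives the vertex lower bound $\mu(K_\pm)\ge\beta$. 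Beyond this, the two places a careful reader will press are (i) the scaling limit you invoke is for uniform \emph{simple} triangulations (type~III), which is not literally in \cite{gal-main} but is due to Addario-Berry and Albenque; and (ii) your extraction of a single cut point $x^\ast$ from the nested bottleneck events needs more than ``passing to a Hausdorff limit of the pieces'', since Hausdorff limits do not commute with intersections and the decompositions $A^{1/k}_\pm$ vary with $k$. Reformulating the $(\delta,\beta)$-bottleneck as ``some closed $\delta$-ball disconnects the space into two parts of mass $\ge\beta$'' makes both the GHP-closedness and the nestedness in $\delta$ transparent, and intersecting over $\delta\downarrow 0$ then yields the cut point directly, contradicting $\mathbf m\cong S^2$.
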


The reason is as follows (cf. \cite[Cor.5.3]{gal-lect}). If there
were a lot of ``thin'' simplicial spheres, they all would have
bottlenecks --- small cycles, dividing them into macroscopic
regions. After taking a limit as $n\to\infty$ and rescaling the
metric, these bottlenecks would collapse to points. Thus the
limiting metric space would be non-homeomorphic to a sphere with
non-zero probability.

Therefore, for most of simple combinatorial 3-polytopes $P$ there
exists a characteristic function $\Lambda$ such that
$M_{(P,\Lambda)}$ is not toric origami.

\subsection{Orbit spaces of toric origami manifolds}
We may ask a more intricate question.

\begin{problem}\label{problCombRestr}
Find a simple polytope $P$ such that any quasitoric manifold
$M_{(P,\Lambda)}$ over $P$ is not equivariantly homeomorphic to a
toric origami manifold.
\end{problem}

This question is motivated by the following fact. There exist a
simple 3-polytope $P$ such that any quasitoric manifold
$M_{(P,\Lambda)}$ over $P$ is not equivariantly homeomorphic to a
symplectic toric manifold. Stating shortly: there exist
combinatorial types of simple 3-polytopes which do not admit
Delzant realizations. It was proved in \cite{delaunay} that any
3-dimensional Delzant polytope has at least one triangular or
quadrangular face. Consequently, in particular, a dodecahedron
does not admit a Delzant realization.

An origami template is a generalization of a single Delzant
polytope, thus a realizability of a given combinatorial polytope
by an origami template is a more complicated task.
Problem~\ref{problCombRestr} can be restated in different terms:
are there any combinatorial restrictions on the orbit spaces of
toric origami manifolds?

\subsection{Fat simplicial spheres in higher dimensions}
The examples of non-origami quasitoric manifolds in high
dimensions were constructed from the $3$-dimensional case. On the
other hand, Lemma~\ref{lemBigFatSmalCol} applies for any
dimension. The problem of finding higher-dimensional polytopes
whose dual spheres have large fatness may be of independent
interest.

Actually, even if we find such a fat sphere, to make use of the
developed technique we should also construct a characteristic
function with a small range of values. This constitutes a certain
problem, since characteristic function may not even exist, if
$n\geqslant 4$ (this happens for dual neighborly polytopes,
see~\cite{da-ja}). Nevertheless, there is a big class of
simplicial $(n-1)$-spheres, so called balanced spheres, which
admit a proper vertex-coloring in $n$ colors. Such colorings give
rise to characteristic functions, which have exactly $n$ values,
i.e. minimal possible. Such characteristic functions and the
corresponding quasitoric manifolds were called linear models
in~\cite{da-ja}. Passing to a barycentric subdivision makes every
simplicial sphere into a balanced sphere. We suppose that passing
to a barycentric subdivision does not strongly affect the fatness.
If so, given any fat sphere dual to a simple polytope, one can
pass to its barycentric subdivision, provide it with a linear
model characteristic function, and finally obtain a quasitoric
manifold which is not toric origami.

\subsection{Minimizing the range of characteristic function}
Another problem, which naturally arises from
Lemma~\ref{lemBigFatSmalCol} is to find, for a given polytope $P$,
a characteristic function $\Lambda$ with the minimal possible
range of values $|\Lambda(\F(P))|$, if at least one characteristic
function is known to exist. This minimal number seems to be an
analogue of Buchstaber invariant (see the definition in
\cite{erokh} or \cite{ayz}), as was noted to us recently by
N.\,Erokhovets. It may happen that an interesting theory hides
beyond this subject.

\subsection{Toric varieties}
There exist obstructions to origami realizability, other than
those described in section~\ref{sectProof}. If a weighted
simplicial sphere $K$ can be represented as a connected sum, along
a tree, of simplicial spheres dual to Delzant polytopes, this does
not mean automatically that $K$ corresponds to an origami
template. The reason is that a convex polytope contains more
information than its normal fan (or, in our terminology, dual
weighted simplicial sphere). It can be impossible to assemble an
origami template from a collection of Delzant polytopes, even if
their dual weighted spheres suit together well.

%\begin{ex} Fig.???
%
%\begin{figure}[h]
%\begin{center}
%    \begin{tikzpicture}[scale=1]
%        \draw (0,0)--(3,6)--(6,0)--(1.5,1)--(3,4)--(4.5,1)--(3,2);
%        \draw (1.5,1)--(3,2)--(3,4);
%        \draw (0,0)--(3,4)--(3,6);
%        \draw (3,6)--(4.5,1);
%        \draw[red] (0,0)--(1.5,1)--(4.5,1)--(6,0)--cycle;
%        \fill (0,0) circle(2pt) node[anchor=north] {$(-1,0,0)$};
%        \fill (6,0) circle(2pt) node[anchor=north] {$(0,-1,0)$};
%        \fill (3,6) circle(2pt) node[anchor=south] {$(0,0,-1)$};
%        \fill (1.5,1) circle(2pt) node[anchor=south] {$(0,1,1)$};
%        \fill (4.5,1) circle(2pt) node[anchor=south] {$(1,0,1)$};
%        \fill (3,4) circle(2pt) node[anchor=south] {$(1,1,0)$};
%        \fill (3,2) circle(2pt) node[anchor=south] {$(1,1,1)$};
%    \end{tikzpicture}
%\end{center}
%\caption{Connected sum which is not realized by an origami
%template.} \label{fig:badsum}
%\end{figure}
%\end{ex}

Such situations appeared when we tried to answer the following

\begin{problem}
Does there exist a compact smooth toric variety, which is not
equivariantly homeomorphic to a toric origami manifold?
\end{problem}

Any projective toric variety corresponds to a convex polytope.
Thus any smooth projective toric variety is a symplectic toric
manifold, which is a particular case of toric origami. Thus, to
prove the conjecture, one should consider non-projective examples.
Translating the problem into combinatorial language, the task is
to find a complete smooth fan, which is not a normal fan of any
polytope and, moreover, not dual to any origami template. The
simplest non-polytopal fan is the fan corresponding to a famous
non-projective Oda's 3-fold~\cite[p.84]{oda}. So it is natural to
start with a more concrete question:

\begin{problem}
Is Oda's 3-fold a toric origami manifold?
\end{problem}

Even this question happens to be rather non-trivial and cannot be
solved solely by the method developed in this paper.

\subsection{Origami manifolds which are not quasitoric}
In section~\ref{sectTopology} we mentioned that a toric origami
manifold $M_O$ is not quasitoric if its template graph has cycles.
Even if the orbit space of $M_O$ is contractible, the manifold
$M_O$ may not be quasitoric. The simplest example of this kind is
the sphere $S^4$ (example~\ref{exSfour}). The orbit space of $S^4$
is a 2-gon, shown on Fig.~\ref{fig:orbitspace}, which is not a
convex polytope. Excluding situations of these two kinds we may
ask the following question.

\begin{problem}
Let $M_O$ be a simply connected toric origami manifold and suppose
that the dual simplicial sphere of its orbit space is a simplicial
complex. Is the manifold $M_O$ quasitoric?
\end{problem}

In other words, does the orbit space of a simply connected toric
origami manifold admit a convex realization, provided that its
dual simplicial sphere is a simplicial complex?

\end{document}